\theoremstyle{definition}
\newtheorem{cor}{Corollary}
\newtheorem{lem}{Lemma}
\newtheorem{thm}{Theorem}
\theoremstyle{definition}
\newtheorem{defn}{Definition}
\theoremstyle{definition}
\newtheorem{example}{Example}
\newtheorem{rem}{Remark}
\newenvironment{pf}{\proof}{\endproof}
\newcounter{cnt}
\def\mydggeometry{\makeatletter\dg@YGRID=1\dg@XGRID=20\unitlength=0.003pt\makeatother}
\makeatother \theoremstyle{remark}
\numberwithin{equation}{section}
\let\bwdg\bigwedge
\def\bigwedge{{\textstyle\bwdg}}
\newcommand{\nc}{\newcommand}
\newcommand{\rnc}{\renewcommand}
\nc{\cal}{\mathcal} \nc{\goth}{\mathfrak} \rnc{\bold}{\mathbf}
\nc\bomega{{\mbox{\boldmath $\omega$}}} \nc\bpsi{{\mbox{\boldmath $\Psi$}}}
 \nc\balpha{{\mbox{\boldmath $\alpha$}}}
 \nc\bpi{{\mbox{\boldmath $\pi$}}}
 \nc\bvpi{{\mbox{\boldmath $\varpi$}}}
\nc\chara{\operatorname{ch}}
  \nc\bxi{{\mbox{\boldmath $\xi$}}}
\nc\bmu{{\mbox{\boldmath $\mu$}}} \nc\bcN{{\mbox{\boldmath $\cal{N}$}}} \nc\bcm{{\mbox{\boldmath $\cal{M}$}}} \nc\blambda{{\mbox{\boldmath
$\lambda$}}}\nc\bnu{{\mbox{\boldmath $\nu$}}}
\def\section{\def\@secnumfont{\mdseries}\@startsection{section}{1}%
  \z@{.7\linespacing\@plus\linespacing}{.5\linespacing}%
  {\normalfont\scshape\centering}}
\def\subsection{\def\@secnumfont{\bfseries}\@startsection{subsection}{2}%
  {\parindent}{.5\linespacing\@plus.7\linespacing}{-.5em}%
  {\normalfont\bfseries}}
 \nc{\Hom}{\operatorname{Hom}}
  \nc{\mode}{\operatorname{mod}}
\nc{\End}{\operatorname{End}} \nc{\wh}[1]{\widehat{#1}} \nc{\Ext}{\operatorname{Ext}} \nc{\ch}{\text{ch}} \nc{\ev}{\operatorname{ev}}
\nc{\Ob}{\operatorname{Ob}} \nc{\soc}{\operatorname{soc}} \nc{\rad}{\operatorname{rad}} \nc{\head}{\operatorname{head}}
\def\spec{\operatorname{spec}}
 \nc{\Cal}{\cal} \nc{\Xp}[1]{X^+(#1)} \nc{\Xm}[1]{X^-(#1)}
\nc{\on}{\operatorname} \nc{\Z}{{\bold Z}} \nc{\J}{{\cal J}}  \nc{\Q}{{\bold Q}}
\nc{\N}{{\bold N}}  \nc\boa{\bold a} \nc\bob{\bold b} \nc\boc{\bold c} \nc\bod{\bold d} \nc\boe{\bold e} \nc\bof{\bold f} \nc\bog{\bold g}
\nc\boh{\bold h} \nc\boi{\bold i} \nc\boj{\bold j} \nc\bok{\bold k} \nc\bol{\bold l} \nc\bom{\bold m} \nc\bon{\mathbb n} \nc\boo{\bold o}
\nc\bop{\bold p} \nc\boq{\bold q} \nc\bor{\bold r} \nc\bos{\bold s} \nc\boT{\bold t} \nc\boF{\bold F} \nc\bou{\bold u} \nc\bov{\bold v}
\nc\bow{\bold w} \nc\boz{\bold z}\nc\ba{\bold A} \nc\bb{\bold B} \nc\bc{\mathbb C} \nc\bd{\bold D} \nc\be{\bold E} \nc\bg{\bold
G} \nc\bh{\bold H} \nc\bi{\bold I} \nc\bj{\bold J} \nc\bk{\bold K} \nc\bl{\bold L} \nc\bm{\bold M} \nc\bn{\mathbb N} \nc\bo{\bold O} \nc\bp{\bold
P} \nc\bq{\bold Q} \nc\br{\bold R} \nc\bs{\bold S} \nc\bt{\bold T} \nc\bu{\bold U} \nc\bv{\bold V} \nc\bw{\bold W} \nc\bz{\mathbb Z} \nc\bx{\bold
x} \nc\KR{\bold{KR}} \nc\rk{\bold{rk}} \nc\het{\text{ht }}
\nc\toa{\tilde a} \nc\tob{\tilde b} \nc\toc{\tilde c} \nc\tod{\tilde d} \nc\toe{\tilde e} \nc\tof{\tilde f} \nc\tog{\tilde g} \nc\toh{\tilde h}
\nc\toi{\tilde i} \nc\toj{\tilde j} \nc\tok{\tilde k} \nc\tol{\tilde l} \nc\tom{\tilde m} \nc\ton{\tilde n} \nc\too{\tilde o} \nc\toq{\tilde q}
\nc\tor{\tilde r} \nc\tos{\tilde s} \nc\toT{\tilde t} \nc\tou{\tilde u} \nc\tov{\tilde v} \nc\tow{\tilde w} \nc\toz{\tilde z} \nc\woi{w_{\omega_i}}
\newcommand\wrapped[1]%
\begin{document}


\title{A generalization of Fiedler's lemma and the spectra of $H$-join of graphs}
\author{M. Saravanan}
\address{Madurai Kamaraj University Constituent College, Sattur, India.}
\email{dr.msaravanan8187@gmail.com.}
\author{S. P. Murugan}
\address{Indian Institute of Science Education and Research, Mohali, India.}
\email{spmath000@gmail.com.}
\author{G. Arunkumar}
\address{Indian Institute of Science, Bangalore, India.}
\email{arun.maths123@gmail.com, garunkumar@iisc.ac.in.}

\thanks{The authors would like to thank M. Rajesh Kannan, Department of Mathematics, Indian Institute of Technology, Kharagpur, for his valuable comments and suggestions on this work. The first author would like to thank him  for the support and the fruitful discussions during his visit to IIT Kharagpur, which is a motivation for this work. The second author acknowledges the institute postdoctoral fellowship of IISER, Mohali. The third author is grateful to Apoorva Khare, Department of Mathematics, Indian Institute of Science, Bangalore, for his constant support and encouragement. The third author also acknowledges the NBHM grant (0204/7/2019/R\&D-II/6831).}

\subjclass[2010]{05C50, 05C76}
\keywords{Graph operations, Graph eigenvalues, Universal adjacency matrix}

\begin{abstract}
A new generalization of Fiedler's lemma is obtained by introducing the concept of the main function of a matrix. As applications, the universal spectra of the $H$-join, the spectra of the $H$-generalized join and the spectra of the generalized corona of any graphs (possibly non-regular) are obtained.
\end{abstract}

\maketitle

\section{Introduction}

All the graphs considered in this paper are finite and simple. The eigenvalues of a graph $G$ are the eigenvalues of its adjacency matrix $A(G)$. The set of all eigenvalues of $G$ is called the spectrum of $G$, denoted by $\spec(G)$. For more on graphs and their eigenvalues we refer \cite{cvet,cvet2}. Let $H$ be a graph with vertex set $\{v_1,\dots,v_k\}$ and let $\mathcal{F}=\{G_1,G_2, \dots, G_k\}$ be a family of graphs. In \cite{hjn}, the $H$-join operation of the graphs $G_1, G_2, \dots, G_k,$ denoted by $\displaystyle \bigvee_{H}\mathcal{F}$, is obtained by replacing the vertex $v_i$ of $H$ by the graph $G_i$ for $1 \le i \le k$ and every vertex of $G_i$ is made adjacent with every vertex of $G_j$, whenever $v_i$ is adjacent to $v_j$ in $H$. Precisely, $\displaystyle \bigvee_{H}\mathcal{F}$ is the graph with vertex set $V\big(\displaystyle \bigvee_{H} \mathcal{F} \big) = \displaystyle \bigcup_{i=1}^k V(G_i)$ and 
edge set $E\big(\displaystyle \bigvee_{H} \mathcal{F} \big) = \big(\displaystyle  \bigcup_{i=1}^k E(G_i)\big) \cup ( \displaystyle  \bigcup_{ v_iv_j \in E(H)} \{ xy : x \in V(G_i), y \in V(G_j) \}).$
In addition, by considering a family of vertex subsets $\mathcal{S}=\{S_1,S_2, \dots, S_k\}$ where $S_i \subset V(G_i)$ for each $1 \le i \le k$, a generalization of $H$-join operation, known as $H$-generalized join operation constrained by vertex sets, $\displaystyle \bigvee_{H,\mathcal{S}}\mathcal{F}$ is introduced in \cite{cdo13} as follows:
$V\big(\displaystyle \bigvee_{H,\mathcal{S}}\mathcal{F} \big) = \displaystyle \bigcup_{i=1}^k V(G_i)$ and 
$E\big(\displaystyle \bigvee_{H,\mathcal{S}}\mathcal{F} \big) = \big(\displaystyle  \bigcup_{i=1}^k E(G_i)\big) \cup ( \displaystyle  \bigcup_{ v_iv_j \in E(H)} \{ xy : x \in S_i, y \in S_j \}).$ For instance consider the examples in Section \ref{examples}. If we take $S_i = V(G_i)$ for each $1 \le i \le k$, then the $H$-generalized join operation $\displaystyle \bigvee_{H,\mathcal{S}}\mathcal{F}$ coincides with the $H$-join operation of the graphs $G_1,G_2\dots,G_k$. In \cite{sch}, the $H$-join operation of the graphs was initially introduced as generalized composition by Schwenk, denoted by $H[G_1,G_2,\dots,G_k]$. Also, the same operation is studied in some other names as generalized lexicographic product and joined union in \cite{wng,nupa,stev}. When all $G_i$'s are equal to the same graph $G$, it is called the lexicographic product\cite{gdry}, denoted by $H[G]$.

The following lemma \cite[Lemma 2.2]{fid} is proved by M. Fiedler and effectively used in the study of finding sufficient conditions for $k$ arbitrary real numbers to be eigenvalues of a non-negative $k \times k$ symmetric matrix.
\begin{lem}\cite{fid}\label{clasc}
	Let $A$ be a symmetric $m \times m$ matrix with eigenvalues $\alpha_1, \alpha_2,\dots,\alpha_m$ and $B$ be a symmetric $n \times n$ matrix with eigenvalues $\beta_1,\beta_2, \dots,\beta_n$.  let $u$ be an eigenvector of $A$ corresponding to $\alpha_1$ and $v$ be an eigenvector of $B$ corresponding to $\beta_1$ such that $\Vert u \Vert= \Vert v \Vert = 1$. Then for any constant $\rho$ the matrix $$C = \begin{bmatrix}
	A & \rho u v^t \\ \rho v u^t & B
	\end{bmatrix}$$ has eigenvalues $\alpha_2,\dots,\alpha_m,\beta_2,\dots,\beta_n,\gamma_1,\gamma_2$ where $\gamma_1$ and $\gamma_2$ are the eigenvalues of the matrix $$\widehat{C} = \begin{bmatrix}
	\alpha_1 & \rho \\ \rho & \beta_1
	\end{bmatrix}.$$
\end{lem}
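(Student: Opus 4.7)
The plan is to construct an explicit orthonormal eigenbasis of $C$ that realizes all $m+n$ claimed eigenvalues, and then verify orthogonality/independence to account for the full spectrum.

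First I would extend the given unit eigenvectors: let $\{u=u_1,u_2,\dots,u_m\}$ be an orthonormal eigenbasis of $A$ with $Au_i=\alpha_i u_i$, and $\{v=v_1,v_2,\dots,v_n\}$ be one for $B$ with $Bv_j=\beta_j v_j$. For each $i\ge 2$ I would test the stacked vector $\binom{u_i}{0}$ against $C$. The top block yields $Au_i=\alpha_i u_i$, while the bottom block produces $\rho v(u^t u_i)=0$ because $u^t u_i=0$ by orthonormality, so $\binom{u_i}{0}$ is an eigenvector of $C$ with eigenvalue $\alpha_i$. Symmetrically, $\binom{0}{v_j}$ is an eigenvector with eigenvalue $\beta_j$ for $j\ge 2$. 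This already accounts for $(m-1)+(n-1)$ of the eigenvalues on the list.

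For the remaining two eigenvalues, I would search for eigenvectors that live in the two-dimensional subspace spanned by $\binom{u}{0}$ and $\binom{0}{v}$, i.e.\ vectors of the form $\binom{au}{bv}$. A direct computation, using $u^tu=v^tv=1$, gives
\[
C\begin{pmatrix} au \\ bv \end{pmatrix}=\begin{pmatrix}(a\alpha_1+b\rho)\,u \\ (a\rho+b\beta_1)\,v\end{pmatrix},
\]
so that $\binom{au}{bv}$ is an eigenvector of $C$ with eigenvalue $\gamma$ if and only if $\binom{a}{b}$ is an eigenvector of the $2\times 2$ matrix $\widehat{C}$ with eigenvalue $\gamma$. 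Since $\widehat{C}$ is symmetric it contributes exactly the two eigenvalues $\gamma_1,\gamma_2$ with an orthogonal pair of eigenvectors in the $(a,b)$-plane.

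To conclude, I would verify that the $m+n$ eigenvectors produced are linearly independent: the $\binom{u_i}{0}$ for $i\ge 2$ and $\binom{0}{v_j}$ for $j\ge 2$ are manifestly orthogonal among themselves, and each is orthogonal to any vector of the form $\binom{au}{bv}$ because $u_i\perp u$ and $v_j\perp v$; the two vectors coming from $\widehat{C}$ are in turn mutually orthogonal. Thus we have exhibited a complete orthonormal eigenbasis of the symmetric matrix $C$ realising the full multiset $\{\alpha_2,\dots,\alpha_m,\beta_2,\dots,\beta_n,\gamma_1,\gamma_2\}$, which proves the lemma. No step is really an obstacle here; the only subtlety worth flagging is the repeated use of $\|u\|=\|v\|=1$ and orthogonality, which is precisely what decouples the off-diagonal rank-one blocks $\rho uv^t$ and $\rho vu^t$ from the extended eigenvectors.
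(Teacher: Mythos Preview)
Your argument is correct: extending $u$ and $v$ to orthonormal eigenbases and checking that the lifted vectors $\binom{u_i}{0}$, $\binom{0}{v_j}$, and $\binom{au}{bv}$ are eigenvectors of $C$ is the standard direct proof, and your bookkeeping of orthogonality is accurate.

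The paper itself does not prove this lemma; it is simply quoted from Fiedler~\cite{fid}. What the paper \emph{does} prove is the generalization in Theorem~\ref{mainthm}, and there the method is quite different from yours: rather than building eigenvectors, the paper computes the characteristic polynomial of the block matrix via the Schur complement (Lemma~\ref{lemd}) and the rank-one update identities (Lemma~\ref{lemid2}), reducing $\det(\lambda I-C)$ to a product of $\phi_i$'s times the determinant of a small $k\times k$ matrix. Your eigenvector construction is more transparent and gives the eigenvectors explicitly, but it leans essentially on the spectral theorem (orthonormal eigenbases for symmetric $A$ and $B$); the determinantal route chosen in the paper is precisely what lets Theorem~\ref{mainthm} drop the symmetry hypothesis and handle arbitrary complex $M_i$ and arbitrary vectors $u_i,v_i$, at the cost of producing only the characteristic polynomial rather than an eigenbasis.
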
 

In \cite{cdo11, hjn, cdo13}, the above lemma is called Fiedler's lemma and it has been used to obtain the eigenvalues of some graphs. In \cite{cdo11}, a generalization of the Fiedler's Lemma is obtained \cite[Lemma 2]{cdo11} by Cardoso et al. which can be applied in the $H$-join of regular graphs when $H=P_k$, path on $k$ vertices. Then in \cite{hjn} Cardoso et al. obtained another generalization of Fiedler's lemma \cite[Theorem 3]{hjn} as follows, which can be applied in the $H$-join of regular graphs for any $H$.

 \begin{thm}\cite{hjn}\label{gfiedler}
	Let $M_i$ be a symmetric matrix of order $n_i$ and $u_i$ be an eigenvector  of $M_i$ corresponding to the eigenvalue $\alpha_i$, such that $\Vert u_i \Vert=1$ for $1 \le i \le k$. Let $\rho_{ij}$ be a collection of  arbitrary scalars such that $\rho_{ij} = \rho_{ji}$ for $1 \le i < j \le k$. Considering $$\bold M=(M_1, M_2, \dots, M_k), \bold u=(u_1, u_2, \dots, u_k)$$ as $k$-tuples, and $$ \rho=(\rho_{12}, \dots, \rho_{1k},\rho_{23}, \dots, \rho_{2k}, \dots  \rho_{k-1 k})$$ as $\frac{k(k-1)}{2}$-tuple, the following matrices are defined.\\  
	$A(\bold M, \bold u, \rho):=$
		$\begin{bmatrix}	
	M_1 & \rho_{12} u_1u_2^t & \cdots & \rho_{1k} u_1u_k^t \\
	\rho_{21} u_2u_1^t & M_2 & \cdots & \rho_{2k} u_2u_k^t \\
	\vdots & \vdots & \ddots & \vdots  \\
	\rho_{k1} u_ku_1^t & \rho_{k2} u_ku_2^t & \cdots & M_k
\end{bmatrix}$  and
		  $\widetilde A(\bold M, \bold u, \rho):=$  
    $\begin{bmatrix}
	\alpha_{1} & \rho_{12} & \cdots & \rho_{1k} \\  
	\rho_{21} & \alpha_{2} & \cdots & \rho_{2k}  \\
	\vdots & \vdots & \ddots & \vdots  \\
	\rho_{k1} & \rho_{k2} & \cdots &\alpha_{k}
\end{bmatrix}.$

 Then $\spec(A(\bold M, \bold u, \rho))= \Bigg(\bigcup_{i=1}^k \left( \spec(M_i)\backslash\{\alpha_{i}\} \right)\Bigg) \cup \spec(\widetilde A(\bold M, \bold u, \rho))$  
 \end{thm}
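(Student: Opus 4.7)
The plan is to exhibit an orthonormal basis of eigenvectors of $A(\bold M, \bold u, \rho)$ in which $\sum_{i=1}^k (n_i-1)$ of them come from the eigenvectors of the $M_i$'s orthogonal to $u_i$, and the remaining $k$ come by ``lifting'' the eigenvectors of $\widetilde A(\bold M, \bold u, \rho)$. Since $\sum_i(n_i-1)+k=\sum_i n_i$ equals the order of $A$, this will exhaust the spectrum.

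First, since each $M_i$ is symmetric, I would fix an orthonormal eigenbasis $\{u_i^{(1)}, u_i^{(2)},\dots, u_i^{(n_i)}\}$ of $M_i$ with $u_i^{(1)}=u_i$ and $M_i u_i^{(j)} = \alpha_i^{(j)} u_i^{(j)}$. For each $i$ and each $j\ge 2$, form the block column vector $\widetilde u_i^{(j)}\in \mathbb R^{n_1+\cdots+n_k}$ having $u_i^{(j)}$ in the $i$-th block and $0$ elsewhere. A direct block multiplication shows that the $i$-th block of $A\,\widetilde u_i^{(j)}$ is $M_i u_i^{(j)}=\alpha_i^{(j)} u_i^{(j)}$, while for $\ell\neq i$ the $\ell$-th block is $\rho_{\ell i}\, u_\ell (u_i^t u_i^{(j)}) = 0$, because $u_i^{(j)}\perp u_i$ for $j\ge 2$. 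Hence each $\widetilde u_i^{(j)}$ is an eigenvector of $A$ with eigenvalue $\alpha_i^{(j)}$, producing exactly the multiset $\bigcup_{i=1}^k(\spec(M_i)\setminus\{\alpha_i\})$.

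Next, for any $k$-tuple $(c_1,\dots,c_k)$, consider the lifted vector $\widetilde v=(c_1 u_1,\,c_2 u_2,\,\dots,\,c_k u_k)^t$. A block computation gives that the $i$-th block of $A\,\widetilde v$ is
\[
c_i M_i u_i+\sum_{j\neq i}\rho_{ij}\, u_i (u_j^t\, c_j u_j) = \Bigl(c_i\alpha_i+\sum_{j\neq i}\rho_{ij}c_j\Bigr) u_i,
\]
which is precisely $\bigl(\widetilde A(\bold M,\bold u,\rho)\,\bold c\bigr)_i\, u_i$ where $\bold c=(c_1,\dots,c_k)^t$. So if $\bold c$ is an eigenvector of $\widetilde A(\bold M,\bold u,\rho)$ with eigenvalue $\gamma$, then $\widetilde v$ is an eigenvector of $A(\bold M,\bold u,\rho)$ with the same eigenvalue $\gamma$. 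Choosing $k$ orthonormal eigenvectors of the symmetric matrix $\widetilde A(\bold M,\bold u,\rho)$ yields $k$ such lifts.

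Finally, I would verify that the eigenvectors produced in the two steps are linearly independent. The collection $\{\widetilde u_i^{(j)}:i=1,\dots,k,\ j\ge 2\}$ is orthonormal and spans the orthogonal complement, inside $\mathbb R^{n_1+\cdots+n_k}$, of the subspace $W=\spa\{(0,\dots,0,u_i,0,\dots,0)^t : i=1,\dots,k\}$; and the lifts of the $k$ orthonormal eigenvectors of $\widetilde A$ form an orthonormal basis of $W$ (orthogonality is immediate from $\|u_i\|=1$ and the block-disjoint supports). Thus the two families together give $\sum_i n_i$ orthonormal eigenvectors of $A(\bold M,\bold u,\rho)$, which forces the asserted spectral identity. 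The main technical step to be careful with is the orthogonality relation $u_i^t u_i^{(j)}=0$ for $j\ge 2$ — this is what cleanly kills the off-diagonal contributions in the first family of eigenvectors and is the key property that Fiedler's original lemma exploits.
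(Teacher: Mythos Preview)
Your argument is correct. The block computations are clean, the orthogonality $u_i^t u_i^{(j)}=0$ for $j\ge 2$ is exactly what makes the first family work, and the counting via an orthonormal basis of $\mathbb R^{\sum n_i}$ is airtight (the multiset interpretation of $\spec(M_i)\setminus\{\alpha_i\}$ is respected because you remove precisely one eigenvector attached to $\alpha_i$).

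The route is genuinely different from the paper's. The paper does not prove this cited theorem directly; instead it establishes the more general Theorem~\ref{mainthm} by an inductive Schur-complement computation together with the rank-one update identities of Lemma~\ref{lemid2}, obtaining a characteristic-polynomial identity, and then recovers the present statement (in characteristic-polynomial form) as a corollary by plugging in $\Gamma_i=1/(\lambda-\alpha_i)$ from Lemma~\ref{evmain}. Your proof is eigenvector-constructive and relies essentially on the symmetry of the $M_i$ (to secure the orthonormal eigenbases and the orthogonality $u_i\perp u_i^{(j)}$); it is shorter and more transparent in this symmetric setting, and it even identifies the eigenspaces, not just the eigenvalues. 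The paper's determinant approach, by contrast, does not need $u_i$ to be an eigenvector nor $M_i$ to be symmetric, which is precisely the generality exploited in the later applications to non-regular graphs.
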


In \cite{hjn} this result is extensively used to compute the eigenvalues of $H$-join of graphs  when the graphs $G_i$'s are regular and in \cite{cdo13} to compute the eigenvalues of $H$-generalized join operation when the subsets $S_i$'s are $(k,\tau)$-regular. 

In this paper,  we obtain a new generalization of the Fiedler's lemma, in terms of characteristic polynomials. The main difference is we are not restricting $\bold u$ as the $k$-tuple of eigenvectors and $\bold M$ as the $k$-tuple of symmetric matrices. But, we consider $\bold u$ as the $k$-tuple of any complex vectors and $\bold M$ as the $k$-tuple of any complex square matrices of appropriate size. We accomplish this task by introducing the concept of the main function of a matrix, in Section \ref{secmf}. Also as an application of our result, we obtain the characteristic polynomial of $H$-join of graphs when the graphs $G_i$'s are any graphs(possibly non-regular). In \cite{sch} it is remarked by Schwenk, that ``In general, it does not appear likely that the characteristic polynomial of the generalized composition can always be expressed in terms of the characteristic polynomials of $H,G_1,G_2,\dots,G_k$". In our paper, we prove that it is possible to express the characteristic polynomial of $H$-join operation of graphs (i.e. generalized composition) in terms of the characteristic polynomials and main functions of $G_1,G_2,\dots,G_k$, and another function obtained from the adjacency matrix of $H$. Moreover for the $H$-join operation of any graphs, we obtain the characteristic polynomial of its universal adjacency matrix. 

The universal adjacency matrix of a graph $G$ is defined as follows: Let $A(G)$, $I$, $J$, and $D(G)$ be the adjacency matrix of $G$, the identity matrix, the all-one matrix, and the degree matrix of $G$, respectively. Any matrix of the form $U(G) = \alpha A + \beta I + \gamma J + \delta D$ where $\alpha,\beta,\gamma,\delta \in \mathbb R$ and $\alpha \ne 0$ is called the universal adjacency matrix of $G$. Many interesting and important matrices associated to a graph can be obtained as special cases of $U(G)$. For example, from the universal adjacency matrix $U(G)$, we get adjacency matrix $A(G)$, Laplacian matrix $L(G)=D(G)-A(G)$, signless  Laplacian matrix $Q(G)=D(G)+A(G)$, and Seidel matrix $S(G)=J-I-2A(G)$ by taking appropriate values for $\alpha, \beta, \gamma$, and $\delta$. 

In \cite{gerb}, the Laplacian spectra of $H$-join of any graphs is obtained. In \cite{chen19} the characteristic polynomial of the matrix $A(G)-tD(G)$ is obtained for $H$-join of regular graphs. In \cite{wng} the characteristic polynomial of the adjacency matrix of the lexicographic product of any graphs is obtained. Recently in \cite{hmob} universal adjacency spectra of the disjoint union of regular graphs is obtained.

In our paper, we obtain the characteristic polynomial and eigenvalues of the universal adjacency matrix of $H$-join of any graphs $G_1, G_2, \dots, G_k$. Then we obtain the characteristic polynomial and eigenvalues of the adjacency matrix of $H$-generalized join of graphs $G_1, G_2, \dots, G_k$, where the subsets $S_i(G) \subset V(G_i)$ are arbitrary  for $1 \le i \le k$. Also, we deduce the characteristic polynomial of the generalized corona of graphs by visualizing corona as  $H$-join of graphs. Hence many results obtained (mostly for regular graphs) in \cite{hjn, cdo13, chen19, lsk, hmob, wng, wu14}, are generalized here for any graphs.

Throughout this paper, we denote the identity matrix of order $n$ by $I_{n}$, the all-one matrix of order $n$ by $J_{n}$ and the all-one vector of size $n \times 1$ by $\textbf{1}_{n}$. 

\section{The main function of a matrix} \label{secmf}
Consider a graph $G$ on $n$ vertices with adjacency matrix $A(G)$. Suppose $A(G)$ has spectral decomposition $A(G)=\Sigma_{i=1}^k \theta_i E_i$, where $\theta_i$'s are distinct eigenvalues of $G$ and $E_i$ is the orthogonal projection on the eigenspace of $\theta_i$. An eigenvalue $\theta_i$ is called a main eigenvalue\cite{rowl} if the corresponding eigenspace $\mathcal{E}(\theta_i)$ is not orthogonal to $\textbf{1}_n$. The cosines of the angles between $\textbf{1}_n$ and the eigenspaces of $A$ are known as main angles of $G$, given by $\beta_i=\dfrac{1}{\sqrt{n}}\Vert E_i \textbf{1}_n\Vert$, for $1\le i \le k$. So $\theta_i$ is a main eigenvalue if and only if $\beta_i \ne 0$.  

Consider the field of rational functions $\mathbb C(\lambda)$. The $\det(\lambda I -A)$ is a non-zero element of $\mathbb C(\lambda)$ and hence the matrix $\lambda I - A$ is invertible over $\mathbb C(\lambda).$ In \cite{mcln}, the function $\textbf{1}_n^T(\lambda I_n-A(G))^{-1}\textbf{1}_n$ is introduced in the name of coronal of $G$ and is used to find the characteristic polynomial of the corona of two graphs. Since $E_i^2=E_i$, it is easy to see that\\ $$\textbf{1}_n^T(\lambda I_n-A(G))^{-1}\textbf{1}_n=\Sigma_{i=1}^k \dfrac{\textbf{1}_n^T E_i \textbf{1}_n}{\lambda-\theta_i} =\Sigma_{i=1}^k \dfrac{ \Vert E_i \textbf{1}_n \Vert^2}{\lambda-\theta_i}= \Sigma_{i=1}^k \dfrac{n\beta_i^2}{\lambda-\theta_i},$$ in which only non-vanishing terms are those terms corresponding to main eigenvalues.

Also in \cite{mcln}, the authors remarked that graphs with different eigenvalues can have the same coronals whereas cospectral graphs can have different  coronals. This is because of the fact that the main function of a graph depends not only on the eigenvalues but also on the main angles of the graph. For more on the main angles and main eigenvalues, we refer \cite{rowl} and references therein. Because of these relationships with main eigenvalues and main angles of the graph $G$, in this paper we call this function $\textbf{1}_n^T(\lambda I_n-A(G))^{-1}\textbf{1}_n$, the main function of the graph $G$. Moreover for any vectors $u$ and $v$, and a matrix $M$ of the same dimension, we introduce the following notions.
\begin{defn}
	Let $M$ be an $n \times n$ complex matrix, and let $u$ and $v$ be $ n \times 1$ complex vectors. The main function associated to the matrix $M$ corresponding to the vector $u$ and $v$, denoted by $\Gamma_M(u,v)$, is defined to be $\Gamma_M(u,v;\lambda) = v^{t}(\lambda I - M)^{-1}u \in \mathbb C(\lambda)$. When $u=v$, we denote $\Gamma_M(u,v;\lambda)=\Gamma_M(u;\lambda).$ 
\end{defn}

\begin{defn}
	Let $M$ be an $n \times n$ normal matrix over $\mathbb{C}$ and let $u$ be an $ n \times 1$ complex vector. An eigenvalue $\lambda$ of $M$ is called as $u$-main eigenvalue if $u$ is not orthogonal to the eigenspace $\mathcal E_M(\lambda)$. In the case of $u=\textbf{1}_n$, the all-one vector, then we don't specify the vector and call eigenvalue $\lambda$ of $M$ as the main eigenvalue of $M$.  
\end{defn}

\begin{lem}\label{evmain}
	Let $M$ be a matrix of order $n$ with an eigenvector $u$ corresponding to the eigenvalue $\mu$. Then $\Gamma_M(u;\lambda) = \dfrac{\Vert u \Vert^2}{\lambda-\mu}$.
\end{lem}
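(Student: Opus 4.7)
The plan is a one-line calculation once the definitions are unpacked. Recall that by definition $\Gamma_M(u;\lambda)=u^{t}(\lambda I-M)^{-1}u$, viewed as an element of $\mathbb{C}(\lambda)$.

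First I would use the eigenvector relation $Mu=\mu u$ to get
\[
(\lambda I-M)u=(\lambda-\mu)u.
\]
Since $\lambda-\mu$ is a non-zero element of the rational function field $\mathbb{C}(\lambda)$, and since $\lambda I-M$ is invertible over $\mathbb{C}(\lambda)$ (as noted in the discussion of the coronal preceding the definition), I may apply $(\lambda I-M)^{-1}$ to both sides and then divide by $\lambda-\mu$, yielding
\[
(\lambda I-M)^{-1}u=\frac{1}{\lambda-\mu}\,u.
\]
Substituting this into the definition of $\Gamma_M(u;\lambda)$ gives
\[
\Gamma_M(u;\lambda)=u^{t}\left(\frac{1}{\lambda-\mu}u\right)=\frac{u^{t}u}{\lambda-\mu}=\frac{\Vert u\Vert^2}{\lambda-\mu},
\]
which is the claimed identity.

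There is essentially no obstacle: the only points that need a word of justification are (a) the invertibility of $\lambda I-M$ over $\mathbb{C}(\lambda)$, which is a standing assumption for the whole section, and (b) the identification $u^{t}u=\Vert u\Vert^2$, which is consistent with the convention on norms and transposes used in the paper (already employed in the statement of \thmref{gfiedler}). No spectral decomposition, normality, or symmetry of $M$ is needed for this particular lemma, even though the preceding definition of $u$-main eigenvalue assumed $M$ normal; the statement here is purely about a single eigenvector.
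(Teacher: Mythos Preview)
Your proof is correct and follows essentially the same route as the paper: start from $(\lambda I-M)u=(\lambda-\mu)u$, apply $(\lambda I-M)^{-1}$, and read off $\Gamma_M(u;\lambda)=u^{t}u/(\lambda-\mu)=\Vert u\Vert^{2}/(\lambda-\mu)$. Your additional remarks on invertibility over $\mathbb{C}(\lambda)$ and the norm convention are helpful clarifications, but the argument is the same.
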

\begin{pf}
	
	Now $(\lambda I_n - M)u=(\lambda-\mu)u$. Applying $(\lambda I_n - M)^{-1}$ both sides, we can get $u=(\lambda I_n - M)^{-1}(\lambda-\mu)u$, which implies $\dfrac{u^T u}{\lambda-\mu}=u^T (\lambda I_n - M)^{-1}u=\dfrac{\Vert u \Vert^2}{\lambda-\mu}$.
\end{pf}

\begin{lem}\label{polymain}
	Let $M$ be an $n \times n$ normal matrix over $\mathbb{C}$, $u$ be an $ n \times 1$ complex vector and $p(M)$ be a polynomial in $M$ with complex coefficients. Then an eigenvalue $\mu$ is a $u$-main eigenvalue of $M$ if and only if $p(\mu)$ is a $u$-main eigenvalue of $p(M)$. 
\end{lem}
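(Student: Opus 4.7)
My plan is to reduce the statement to a direct computation using the spectral decomposition of the normal matrix $M$, which will translate the $u$-main condition into a non-vanishing condition on orthogonal projections of $u$.

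First, because $M$ is normal I would invoke the spectral theorem to write $M = \sum_{i=1}^{r} \theta_i E_i$, where $\theta_1,\dots,\theta_r$ are the distinct eigenvalues of $M$ and each $E_i$ is the orthogonal projection onto the eigenspace $\mathcal{E}_M(\theta_i)$; in particular $E_iE_j = \delta_{ij}E_i$ and the images of the $E_i$'s are mutually orthogonal. From this one gets immediately $p(M) = \sum_{i=1}^{r} p(\theta_i)\, E_i$. Grouping the indices by the distinct values assumed by $p$ on the spectrum, the eigenspaces of $p(M)$ are the orthogonal direct sums $\mathcal{E}_{p(M)}(\nu) = \bigoplus_{p(\theta_i)=\nu} \mathcal{E}_M(\theta_i)$, and the orthogonal projector onto $\mathcal{E}_{p(M)}(\nu)$ is $F_\nu = \sum_{p(\theta_i)=\nu} E_i$.

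Next I would reformulate the $u$-main property. By definition, an eigenvalue $\theta$ of a normal matrix $N$ is $u$-main if and only if the orthogonal projection of $u$ onto $\mathcal{E}_N(\theta)$ is nonzero. So the statement becomes the clean assertion that $E_\mu u \neq 0$ if and only if $F_{p(\mu)} u \neq 0$. Using the mutual orthogonality of the $E_i$-images, the components $E_i u$ are pairwise orthogonal, hence $\|F_{p(\mu)} u\|^2 = \sum_{p(\theta_i)=p(\mu)} \|E_i u\|^2$. The forward implication is then immediate: if $E_\mu u \neq 0$ then the summand corresponding to $\theta_i=\mu$ already makes the sum strictly positive, so $F_{p(\mu)} u \neq 0$.

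The main difficulty is the converse. As written, $F_{p(\mu)} u \neq 0$ only guarantees that \emph{some} $\theta_i$ with $p(\theta_i)=p(\mu)$ satisfies $E_{\theta_i} u \neq 0$, not necessarily $\theta_i = \mu$ itself; if $p$ collapses two distinct eigenvalues of $M$ to the same value, the equivalence can fail in principle. I expect the argument therefore either to restrict implicitly to polynomials $p$ that are injective on $\spec(M)$, or to invoke the freedom to choose $\mu$ among all preimages of $p(\mu)$ and phrase the converse accordingly. Once that subtlety is fixed, the proof reduces to the projection computation above together with the identity $\|F_{p(\mu)} u\|^2 = \sum_{p(\theta_i) = p(\mu)} \|E_i u\|^2$, and no further ingredients are needed.
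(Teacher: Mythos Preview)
Your approach via the spectral decomposition is exactly the route the paper takes; the paper's proof is the single sentence ``For any eigenvalue of $M$ and corresponding eigenvalue of $p(M)$ the eigenvectors are the same. So the eigenspaces are the same and hence the result follows.'' In other words, the paper asserts precisely the identification $\mathcal{E}_{p(M)}(p(\mu)) = \mathcal{E}_M(\mu)$ that you unpack more carefully.

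You have, however, put your finger on a real gap that the paper's one-line proof glosses over. The assertion that the eigenspaces of $M$ and $p(M)$ coincide is only true when $p$ is injective on $\spec(M)$; in general one only has $\mathcal{E}_{p(M)}(p(\mu)) = \bigoplus_{p(\theta_i)=p(\mu)} \mathcal{E}_M(\theta_i)$, exactly as you write. Your counterexample scenario is valid: if $M$ has two distinct eigenvalues $\theta_1 \neq \theta_2$ with $p(\theta_1)=p(\theta_2)$, and $u$ lies in $\mathcal{E}_M(\theta_2)$, then $\theta_1$ is not $u$-main for $M$ while $p(\theta_1)$ is $u$-main for $p(M)$. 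So the ``only if'' direction is fine, but the ``if'' direction of the lemma as stated is false without the injectivity hypothesis. The paper never addresses this.

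That said, the only place the paper invokes this lemma is in the proof of Theorem~\ref{chUofG}(ii), where the polynomial is the translation $p(x)=x+\delta w_i$, which is injective. So the defect does not propagate to the paper's applications; it is a flaw in the generality claimed for the lemma, not in the downstream results.
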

\begin{pf}
 For any eigenvalue of $M$ and corresponding eigenvalue of $p(M)$ the eigenvectors are the same. So the eigenspaces are the same and hence the result follows.   
 \end{pf}

Now we can state our main result, a new generalization of Fiedler's lemma. 

 \begin{thm}\label{mainthm}
	
	Let $M_i$ be a complex matrix of order $n_i$, and let $u_i$ and $v_i$ be arbitrary complex vectors of size $n_i \times 1$ for $1 \le i \le k$. Let $\rho_{ij}$ be arbitrary complex numbers for $1 \le i,j \le k$ and $i \ne j$. For each $1 \le i \le k$, let $\phi_i(\lambda)=\det(\lambda I_{n_i}-M_i)$ be the characteristic polynomial of the matrix $M_i$ and $\Gamma_i(\lambda) = \Gamma_{M_i}(u_i,v_i;\lambda) = v_i^t (\lambda I - M_i)^{-1} u_i$. 
	Considering 
\begin{center}
the $k$-tuple $\bold M=(M_1, M_2, \dots, M_k)$, 2$k$-tuple $\bold u=(u_1,v_1, u_2,v_2 \dots, u_k,v_k)$ \\ and ${k(k-1)}$-tuple $ \rho=(\rho_{12}, \rho_{12} \dots, \rho_{1k},\rho_{21}, \rho_{23}, \dots, \rho_{2k}, \dots, \rho_{k1}, \rho_{k2}, \dots,  \rho_{k-1 k})$
\end{center}	
	 the following matrices are defined: $$A(\bold M, \bold u, \rho) := \begin{bmatrix}
	
	M_1 & \rho_{12} u_1v_2^t & \cdots  & \rho_{1k} u_1v_k^t \\
	\rho_{21} u_2v_1^t & M_2 & \cdots  & \rho_{2k} u_2v_k^t \\
	\vdots & \vdots & \ddots  & \vdots \\
	\rho_{k1} u_kv_1^t & \rho_{k2} u_kv_2^t & \cdots  &M_k
\end{bmatrix}$$ $$\text{ and }
 \widetilde{A}(\bold M, \bold u, \rho) :=  \begin{bmatrix}
\frac{1}{\Gamma_1} & -\rho_{12}  & \cdots & -\rho_{1,k}  \\
-\rho_{21}  & \frac{1}{\Gamma_2} & \cdots & -\rho_{2,k} \\
\vdots & \vdots & \ddots & \vdots  \\
-\rho_{k1} & -\rho_{k2} & \cdots &\frac{1}{\Gamma_{k}}
\end{bmatrix}.$$

Then the characteristic polynomial of $A(\bold M, \bold u, \rho)$ is given as
\begin{equation}\label{maineqn}
\det(\lambda I - A(\bold M, \bold u, \rho)) = \Bigg( \Pi_{i=1}^k \phi_i(\lambda) \Gamma_i(\lambda) \Bigg) \det(\widetilde{A}(\bold M, \bold u, \rho)) .
\end{equation} 
\end{thm}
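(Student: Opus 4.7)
The plan is to reduce the full block-matrix determinant to the $k\times k$ determinant of $\widetilde A$ by exploiting the rank-one structure of the off-diagonal blocks, together with the Sylvester / Weinstein--Aronszajn determinant identity. First I would write
\begin{equation*}
\lambda I - A(\bold M,\bold u,\rho) = D - R,
\end{equation*}
where $D$ is the block-diagonal matrix whose $i$-th block is $\lambda I_{n_i}-M_i$, and $R$ has zero diagonal blocks and $(i,j)$-block $\rho_{ij}\,u_i v_j^t$ for $i\neq j$. Over the rational function field $\mathbb{C}(\lambda)$ the matrix $D$ is invertible since $\det(D)=\prod_i \phi_i(\lambda)$ is a nonzero polynomial. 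To expose the rank structure I encode $R$ as $R=UPV^t$, where $U$ and $V$ are the block-diagonal matrices of size $(\sum_i n_i)\times k$ whose $(i,i)$-blocks are the column vectors $u_i$ and $v_i$ respectively (and zero elsewhere), and $P$ is the $k\times k$ hollow matrix with $P_{ij}=\rho_{ij}$ for $i\neq j$ and $P_{ii}=0$. A direct computation gives $(UPV^t)_{ij}=P_{ij}\,u_iv_j^t$, so indeed $R=UPV^t$.

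Next I would apply the Sylvester determinant identity $\det(D-XY)=\det(D)\det(I_k-YD^{-1}X)$ with $X=UP$ and $Y=V^t$, obtaining
\begin{equation*}
\det(\lambda I - A(\bold M,\bold u,\rho)) = \Bigl(\prod_{i=1}^{k}\phi_i(\lambda)\Bigr)\,\det\bigl(I_k - G(\lambda)\,P\bigr),
\end{equation*}
where $G(\lambda):=V^t D^{-1}U$. Since $U$, $V$ and $D^{-1}$ are block-diagonal, a short computation shows that $G(\lambda)$ is the diagonal $k\times k$ matrix with $(i,i)$-entry $v_i^t(\lambda I_{n_i}-M_i)^{-1}u_i=\Gamma_i(\lambda)$; that is, $G(\lambda)=\diag(\Gamma_1(\lambda),\dots,\Gamma_k(\lambda))$.

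Finally, I would identify $\det(I_k-GP)$ with the right-hand side of \eqref{maineqn}. Assuming each $\Gamma_i$ is nonzero as a rational function, $G$ is invertible over $\mathbb{C}(\lambda)$ and an entry-wise inspection shows $\widetilde A(\bold M,\bold u,\rho)=G^{-1}-P=G^{-1}(I_k-GP)$; hence
\begin{equation*}
\det(\widetilde A(\bold M,\bold u,\rho))=\frac{\det(I_k-GP)}{\prod_{i=1}^{k}\Gamma_i(\lambda)},
\end{equation*}
and substitution gives the desired formula \eqref{maineqn}. The main (and only real) obstacle is the degenerate case where some $\Gamma_i\equiv 0$, which would make the entries $1/\Gamma_i$ of $\widetilde A$ undefined. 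I would dispose of this by observing that $\phi_i(\lambda)\,\Gamma_i(\lambda)=v_i^t\,\operatorname{adj}(\lambda I_{n_i}-M_i)\,u_i$ is a polynomial, and expanding $\det(\widetilde A)$ along $S_k$ and absorbing the prefactor yields
\begin{equation*}
\prod_{i=1}^{k}\phi_i(\lambda)\Gamma_i(\lambda)\cdot\det(\widetilde A)=\sum_{\sigma\in S_k}\sgn(\sigma)\prod_{i\in \mathrm{Fix}(\sigma)}\phi_i(\lambda)\prod_{i\notin \mathrm{Fix}(\sigma)}\bigl(-\rho_{i,\sigma(i)}\,\phi_i(\lambda)\Gamma_i(\lambda)\bigr),
\end{equation*}
which is a polynomial expression in $\lambda$ depending polynomially on the data $u_i,v_i,\rho_{ij}$. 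The identity therefore extends from the Zariski-dense open locus where all $\Gamma_i\not\equiv 0$ to arbitrary parameters by a standard polynomial-identity argument.
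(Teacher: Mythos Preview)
Your proof is correct and takes a genuinely different route from the paper. The paper argues by induction on $k$: it applies the Schur complement formula (Lemma~\ref{lemd}) to peel off the last block $\lambda I_{n_k}-M_k$, notes that each remaining diagonal block picks up a rank-one perturbation $-\Gamma_k\rho_{ik}\rho_{ki}\,u_iv_i^t$, invokes the induction hypothesis together with Lemma~\ref{lemid2} to evaluate the resulting $(k-1)\times(k-1)$ determinant, and finally reassembles the pieces via another use of Lemma~\ref{lemd}. Your approach is non-inductive: by recognizing the entire off-diagonal part as a single rank-$\le k$ perturbation $UPV^t$ of the block-diagonal $D$, one application of the Sylvester/Weinstein--Aronszajn identity collapses the $n\times n$ determinant to the $k\times k$ determinant $\det(I_k-GP)$, and the identification $\widetilde A=G^{-1}(I_k-GP)$ finishes the argument. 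This is shorter and more conceptual; it also makes the structure of the answer transparent, since $G=\diag(\Gamma_1,\dots,\Gamma_k)$ records the main functions while $P$ records the coupling data $\rho_{ij}$. Your explicit treatment of the degenerate case $\Gamma_i\equiv 0$ via the adjugate formula $\phi_i\Gamma_i=v_i^t\operatorname{adj}(\lambda I-M_i)u_i$ and a polynomial-identity argument is a point the paper leaves implicit (the theorem statement itself presupposes each $\Gamma_i\not\equiv 0$ for $\widetilde A$ to be defined, and the paper's intermediate computations in fact yield the equivalent form with entries $1$ and $-\rho_{ij}\Gamma_i$, which is well defined regardless).
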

Proof of this theorem is given in Section \ref{subsecpfofmainthm}. At first, we deduce Theorem \ref{gfiedler} in terms of characteristic polynomials as a corollary of Theorem \ref{mainthm}.

\begin{cor}
Consider the notations defined in Theorem \ref{mainthm}. Suppose $u_i=v_i$ is an eigenvector of $M_i$ corresponding to an eigenvalue $\alpha_i$ with $\Vert u_i \Vert=1$, then the characteristic polynomial of $A(\bold M, \bold u, \rho)$  is \begin{center}
 $\phi(A(\bold M, \bold u, \rho))=\frac{\phi_1}{\lambda-\alpha_1} \frac{\phi_2}{\lambda-\alpha_2} \dots \frac{\phi_k}{\lambda-\alpha_k} \det(\widetilde A (\bold M, \bold u, \rho))$ 
 
 where $\widetilde A (\bold M, \bold u, \rho)=
    \begin{bmatrix}
	\lambda-\alpha_1 & -\rho_{12} & \cdots & -\rho_{1k} \\  
	-\rho_{21} & \lambda-\alpha_2 & \cdots & -\rho_{2k}  \\
	\vdots & \vdots & \ddots & \vdots  \\
	-\rho_{k1} & -\rho_{k2} & \cdots &\lambda-\alpha_k
\end{bmatrix}$.
 \end{center}
 \begin{proof}
 Since $\Vert u_i \Vert=1$, by Lemma \ref{evmain} we get $\Gamma_i=\dfrac{1}{\lambda-\alpha_i}.$ Now the proof follows from Theorem \ref{mainthm}.
 \end{proof}
\end{cor}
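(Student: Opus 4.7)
The plan is to deduce this corollary as a direct specialization of Theorem \ref{mainthm} by computing the main functions $\Gamma_i$ explicitly under the eigenvector hypothesis and then substituting into the master identity (\ref{maineqn}).

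First, I would apply Lemma \ref{evmain} to each pair $(M_i,u_i)$. Since $u_i = v_i$ is an eigenvector of $M_i$ for the eigenvalue $\alpha_i$ and $\Vert u_i \Vert = 1$, Lemma \ref{evmain} gives
\[
\Gamma_i(\lambda) \;=\; v_i^{t}(\lambda I_{n_i}-M_i)^{-1}u_i \;=\; \Gamma_{M_i}(u_i;\lambda) \;=\; \frac{\Vert u_i \Vert^2}{\lambda-\alpha_i} \;=\; \frac{1}{\lambda-\alpha_i}.
\]
Note that in Theorem \ref{mainthm} the function $\Gamma_i$ is defined via $u_i$ and $v_i$ separately, so one should record that the hypothesis $u_i = v_i$ is exactly what reduces $\Gamma_{M_i}(u_i,v_i;\lambda)$ to $\Gamma_{M_i}(u_i;\lambda)$, the form handled by Lemma \ref{evmain}.

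Next, I would substitute this closed form into equation (\ref{maineqn}). The prefactor becomes
\[
\prod_{i=1}^k \phi_i(\lambda)\,\Gamma_i(\lambda) \;=\; \prod_{i=1}^k \frac{\phi_i(\lambda)}{\lambda-\alpha_i},
\]
which already matches the leading product in the claim. For the remaining determinantal factor, each diagonal entry $\tfrac{1}{\Gamma_i(\lambda)}$ of the auxiliary matrix $\widetilde{A}(\bold M,\bold u,\rho)$ from Theorem \ref{mainthm} becomes $\lambda-\alpha_i$, while the off-diagonal entries $-\rho_{ij}$ are left untouched. Hence $\widetilde{A}(\bold M,\bold u,\rho)$ coincides entry by entry with the $k\times k$ matrix displayed in the corollary, and taking its determinant finishes the identification.

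There is no substantive obstacle here; the corollary is essentially an immediate specialization of Theorem \ref{mainthm} once Lemma \ref{evmain} is invoked. The only conceptual point worth highlighting is that the unit eigenvector hypothesis is precisely what collapses the rational function $\Gamma_i(\lambda)$ to a single simple pole with residue $1$, which is what converts the generic determinantal factor in (\ref{maineqn}) into the clean $\lambda$-polynomial matrix stated in the corollary.
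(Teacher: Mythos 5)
Your proposal is correct and follows exactly the paper's own route: invoke Lemma \ref{evmain} to get $\Gamma_i(\lambda)=\frac{1}{\lambda-\alpha_i}$ and then substitute into Theorem \ref{mainthm}. You simply spell out the substitution in more detail than the paper does; there is no difference in substance.
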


In \cite[Theorem 2.3]{chen19} another generalization of Fiedler's lemma, similar to Theorem \ref{gfiedler}, is given for the matrices with fixed row sum and the result is used to find the generalized characteristic polynomial of $H$-join of regular graphs. We observe that any such matrix has the all-one vector as an eigenvector. By taking $u_i$ to be the all-one vector of appropriate size, we can deduce \cite[Theorem 2.3]{chen19} from Theorem \ref{mainthm}.

\section{Proof of the main result}
In this section, we prove Theorem \ref{mainthm}. We start with the following essential lemmas.
\subsection{Some important lemmas}
	\begin{lem}\label{lemd}\cite{cvet}
	Let $A, B, C$ and $D$ be matrices such that $M=
	\begin{bmatrix}
	A & B\\
	C & D
	\end{bmatrix}
	$.  If $D$ is invertible, then 
	 $$\det(M) = \det(D) \det(A - BD^{-1}C).$$

\end{lem}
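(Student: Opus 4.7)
The plan is to reduce $M$ to block lower-triangular form by a single determinant-preserving block row operation that uses the invertibility of $D$ in an essential way. All of the remaining work is then routine block arithmetic together with two standard determinant facts.

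First I would form the block upper-triangular matrix
$$L = \begin{bmatrix} I & -BD^{-1} \\ 0 & I \end{bmatrix},$$
which is well defined precisely because $D^{-1}$ exists. A direct block multiplication yields
$$LM = \begin{bmatrix} I & -BD^{-1} \\ 0 & I \end{bmatrix} \begin{bmatrix} A & B \\ C & D \end{bmatrix} = \begin{bmatrix} A-BD^{-1}C & B - BD^{-1}D \\ C & D \end{bmatrix} = \begin{bmatrix} A-BD^{-1}C & 0 \\ C & D \end{bmatrix},$$
since $B - BD^{-1}D = 0$ and the $(1,1)$ block is exactly the Schur complement $A - BD^{-1}C$.

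Next I would invoke two standard determinant facts: (i) $L$ is block triangular with identity blocks on its diagonal, so $\det(L)=1$; and (ii) the determinant of a block triangular matrix whose diagonal blocks are square equals the product of the determinants of those diagonal blocks. Applying (ii) to the block lower-triangular matrix $LM$ gives $\det(LM) = \det(A-BD^{-1}C)\det(D)$. Combining these with $\det(L)\det(M)=\det(LM)$ yields $\det(M) = \det(D)\det(A-BD^{-1}C)$, which is the desired identity.

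The only step going beyond block arithmetic is (ii), and this is the main place a justification is required. It is standard and admits a quick Leibniz-expansion argument: any permutation contributing a nonzero term to the determinant of a block lower-triangular matrix must map the first block of row indices into itself, so the sum splits as a product of the determinants of the diagonal blocks. I would either cite this fact or include a one-sentence indication of this argument. Beyond that there is no conceptual obstacle; the key conceptual point worth flagging is that invertibility of $D$ is exactly the hypothesis that permits the block row-reduction eliminating $B$.
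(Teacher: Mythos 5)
Your proof is correct: the block row operation, the computation of $LM$, and the two determinant facts you invoke are all sound, and the Schur-complement factorization $M = L^{-1}\begin{bmatrix} A-BD^{-1}C & 0 \\ C & D\end{bmatrix}$ is the standard route to this identity. The paper itself gives no proof, citing the result to a reference, so there is nothing to compare against; your argument is exactly the canonical one found in the cited literature.
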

\begin{lem}\cite{bart,dizh}\label{lemid}
	Let $A$ be an $n \times n$ invertible matrix, and let $u$ and $v$ be any two $n \times 1$ vectors such that $1 + v^{t}A^{-1}u \ne 0$. Then 
	\begin{enumerate}
		\item $\det (A+uv^t) = (1+ v^t A^{-1} u) \det (A).$
		\item $(A+uv^t)^{-1} = A^{-1} - \dfrac{A^{-1} u v^t A^{-1}}{1+v^tA^{-1}u}.$
	\end{enumerate} 
\end{lem}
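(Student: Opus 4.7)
The plan is to prove part (1) by reducing to a rank-one perturbation of the identity, and then to deduce part (2) by direct verification of the candidate inverse.

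For (1), I would first factor $A + uv^t = A(I_n + A^{-1}u v^t)$. By multiplicativity of the determinant,
$$\det(A + uv^t) = \det(A)\cdot\det(I_n + xv^t)$$
where $x := A^{-1}u$. It therefore suffices to establish the rank-one identity $\det(I_n + xv^t) = 1 + v^t x$ for arbitrary column vectors $x, v$. The cleanest route is a two-sided Schur complement argument applied to the bordered matrix
$$N = \begin{bmatrix} I_n & -x \\ v^t & 1 \end{bmatrix}.$$
Applying Lemma \ref{lemd} with the top-left block $I_n$ playing the role of ``$D$'' yields $\det N = 1 \cdot (1 - v^t I_n^{-1}(-x)) = 1 + v^t x$, while applying the same lemma with the bottom-right scalar $1$ playing the role of ``$D$'' yields $\det N = 1 \cdot \det(I_n - (-x)(1)^{-1}v^t) = \det(I_n + xv^t)$. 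Equating these two expressions gives the desired identity, and substituting back establishes (1).

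For (2), I would verify by direct multiplication that the stated formula is a right inverse of $A + uv^t$. Expanding
$$(A + uv^t)\left(A^{-1} - \frac{A^{-1}u v^t A^{-1}}{1 + v^t A^{-1}u}\right)$$
produces four terms. The two fractional contributions share denominator $1 + v^t A^{-1}u$, which is nonzero by hypothesis; using that $v^t A^{-1}u$ is a scalar (so it commutes past the outer $u$), their combined numerator becomes $uv^t A^{-1}(1 + v^t A^{-1}u)$, which cancels the denominator and leaves $uv^t A^{-1}$. This in turn cancels with the remaining $uv^t A^{-1}$ contribution from $(A+uv^t)A^{-1}$, leaving $I_n$ as required. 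Since $A + uv^t$ is square, a one-sided inverse is a two-sided inverse.

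Neither part is technically delicate. The only point requiring care in (1) is recognising that Lemma \ref{lemd} applies in both block orderings of $N$, since both diagonal blocks (namely $I_n$ and the scalar $1$) are manifestly invertible. The only point requiring care in (2) is treating $v^t A^{-1}u$ as a scalar when rearranging the product, so that the two collinear rank-one corrections can be combined and cancelled cleanly against the $uv^t A^{-1}$ term.
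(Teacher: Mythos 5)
Your proof is correct. Note, however, that the paper does not prove this lemma at all: it is imported from \cite{bart,dizh} (the matrix determinant lemma and the Sherman--Morrison formula) and used as a black box, so your argument is a self-contained addition rather than an alternative to anything in the text. It is also economical, since it uses only Lemma \ref{lemd}, which the paper already has: part (1) reduces, via the factorization $A+uv^t=A(I_n+A^{-1}uv^t)$, to the rank-one identity $\det(I_n+xv^t)=1+v^tx$, which you extract by evaluating the determinant of the bordered matrix $N$ in two ways; part (2) is a direct right-inverse verification, and for square matrices a one-sided inverse is two-sided. Two small points are worth making explicit in a write-up. First, Lemma \ref{lemd} as stated only gives the Schur complement with respect to the \emph{bottom-right} block, so the evaluation $\det N=1+v^tx$ needs either the companion identity for the top-left block or the observation that conjugating $N$ by the block-swapping permutation matrix preserves the determinant and moves $I_n$ into the bottom-right position; you flag this, but it should be stated rather than waved at. Second, in part (2) the hypothesis $1+v^tA^{-1}u\neq 0$ is exactly what makes the displayed formula meaningful, and by part (1) it is also what guarantees $A+uv^t$ is invertible in the first place. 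Neither point is a gap.
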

\begin{lem}\label{lemid2}
	Let $A$ be an $n \times n$ complex matrix, and let $u$ and $v$ be any $n \times 1$ complex vectors. Also, Let $\Gamma = v^t (\lambda I - A)^{-1} u$. Then 
	\begin{enumerate}
		\item $\det (\lambda I - A + \alpha uv^t) = (1 + \alpha \Gamma) \det(\lambda I - A) = (1 + \alpha \Gamma) \phi_A(\lambda) $	
		\item $v^t(\lambda I - A + \alpha uv^t)u= \dfrac{\Gamma}{1 +\alpha \Gamma}$ 
	\end{enumerate} 
\end{lem}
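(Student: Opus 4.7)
The plan is to deduce both parts directly from Lemma \ref{lemid}, applied over the field of rational functions $\mathbb{C}(\lambda)$ rather than over $\mathbb{C}$. The natural substitution is: in Lemma \ref{lemid}, replace the matrix $A$ by $\lambda I - A$ (which, as noted in Section \ref{secmf}, is invertible as a matrix over $\mathbb{C}(\lambda)$ because $\det(\lambda I - A) = \phi_A(\lambda)$ is a nonzero element of $\mathbb{C}(\lambda)$), and replace the vector $u$ in Lemma \ref{lemid} by $\alpha u$. With these substitutions the quantity $1 + v^{t}(\lambda I - A)^{-1}(\alpha u)$ becomes exactly $1 + \alpha \Gamma$, which is a nonzero element of $\mathbb{C}(\lambda)$ (it tends to $1$ as $\lambda \to \infty$, so it is not identically zero); hence the hypothesis of Lemma \ref{lemid} is satisfied.

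For part (1), I would apply part (1) of Lemma \ref{lemid} to the identity
\[
\lambda I - A + \alpha u v^{t} = (\lambda I - A) + (\alpha u) v^{t},
\]
which immediately gives
\[
\det(\lambda I - A + \alpha u v^{t}) = \bigl(1 + v^{t}(\lambda I - A)^{-1}(\alpha u)\bigr)\det(\lambda I - A) = (1 + \alpha \Gamma)\,\phi_A(\lambda).
\]

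For part (2) (read as $v^{t}(\lambda I - A + \alpha u v^{t})^{-1} u$, since otherwise the right-hand side has no inverse in sight), I would apply part (2) of Lemma \ref{lemid} to get
\[
(\lambda I - A + \alpha u v^{t})^{-1} = (\lambda I - A)^{-1} - \frac{\alpha\,(\lambda I - A)^{-1} u\, v^{t}(\lambda I - A)^{-1}}{1 + \alpha \Gamma}.
\]
Sandwiching this identity between $v^{t}$ on the left and $u$ on the right, every appearance of $v^{t}(\lambda I - A)^{-1}u$ becomes $\Gamma$, yielding
\[
v^{t}(\lambda I - A + \alpha u v^{t})^{-1} u = \Gamma - \frac{\alpha \Gamma^{2}}{1 + \alpha \Gamma} = \frac{\Gamma(1+\alpha\Gamma) - \alpha\Gamma^{2}}{1+\alpha\Gamma} = \frac{\Gamma}{1 + \alpha \Gamma}.
\]

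There is no real obstacle here beyond being careful about the ambient ring: Lemma \ref{lemid} is stated for invertible matrices over a field, and $\lambda I - A$ is not invertible over $\mathbb{C}$ (whenever $\lambda$ is an eigenvalue of $A$), so the only point that needs justification is that the identities should be interpreted as identities in $\mathbb{C}(\lambda)$. Once that is acknowledged, both parts reduce to the matrix determinant lemma and the Sherman--Morrison formula, and collapse in a line after the substitution $(A, u) \mapsto (\lambda I - A,\ \alpha u)$.
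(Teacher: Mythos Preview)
Your proof is correct and follows essentially the same approach as the paper: both parts are obtained by applying Lemma~\ref{lemid} with $A$ replaced by $\lambda I - A$ and $u$ by $\alpha u$, then for part (2) sandwiching the Sherman--Morrison identity between $v^{t}$ and $u$. Your version is slightly more careful in noting that the computation takes place over $\mathbb{C}(\lambda)$ and in verifying that $1+\alpha\Gamma$ is nonzero there, and you correctly spot the missing inverse in the statement of part (2).
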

\begin{proof} The proof of (1) follows directly from Lemma \ref{lemid}(1), as $\det (\lambda I - A + \alpha uv^t)=(1+\alpha v^T(\lambda I - A)^{-1}u)\det(\lambda I - A)$. So we prove (2).\\
By Lemma \ref{lemid}(2),
$$(\lambda I - A + \alpha uv^t)^{-1}=(\lambda I - A)^{-1}-\alpha \dfrac{(\lambda I - A)^{-1}uv^T(\lambda I - A)^{-1}}{1+\alpha v^T(\lambda I - A)^{-1}u}$$
which implies,
$$v^T(\lambda I - A + \alpha uv^t)^{-1}u=\Gamma-\alpha \dfrac{\Gamma^2}{1+\alpha \Gamma}=\dfrac{\Gamma}{1 +\alpha \Gamma}$$
\end{proof}
Motivated by \cite[Theorem 8.13.3]{gdry}, we prove the following lemma.
\begin{lem}\label{uev}
	Let $M$ be a complex normal matrix of order $n$ and let $u$ be any $n \times 1$ vector. Then the poles of $u^T(\lambda I - M)^{-1}u$ are the $u$-main eigenvalues of $M$ and are simple.
\end{lem}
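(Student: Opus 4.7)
The plan is to use the spectral theorem for normal matrices to write $u^{T}(\lambda I - M)^{-1}u$ as an explicit partial-fraction expansion whose residues directly encode the $u$-main eigenvalues.

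The first step is the spectral decomposition. Since $M$ is normal, I would write $M = \sum_{i=1}^{s} \mu_{i} E_{i}$, where $\mu_{1}, \dots, \mu_{s}$ are the distinct eigenvalues of $M$ and $E_{i}$ is the orthogonal projection onto the eigenspace $\mathcal{E}_{M}(\mu_{i})$. Normality delivers the familiar package $E_{i}^{\ast} = E_{i}$, $E_{i}^{2} = E_{i}$, $E_{i}E_{j} = 0$ for $i \neq j$, and $\sum_{i=1}^{s} E_{i} = I_{n}$. In particular $\lambda I - M = \sum_{i=1}^{s}(\lambda - \mu_{i})E_{i}$, and the mutual orthogonality of the $E_{i}$'s lets one invert termwise to obtain
\[
(\lambda I - M)^{-1} \;=\; \sum_{i=1}^{s} \frac{1}{\lambda - \mu_{i}}\, E_{i}
\]
as an identity in the matrix ring over $\mathbb{C}(\lambda)$. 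Multiplying on the left by $u^{T}$ and on the right by $u$ gives the partial-fraction form
\[
u^{T}(\lambda I - M)^{-1}u \;=\; \sum_{i=1}^{s} \frac{u^{T} E_{i} u}{\lambda - \mu_{i}}.
\]

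The second step is to read off the poles. Because the scalars $\mu_{1},\dots,\mu_{s}$ are pairwise distinct, the simple poles in different summands cannot combine or cancel; the poles of the sum are therefore exactly those $\mu_{i}$ with $u^{T} E_{i} u \neq 0$, and each such pole is simple with residue $u^{T} E_{i} u$. I then match the condition $u^{T}E_{i}u \neq 0$ with the $u$-main condition by using $E_{i} = E_{i}^{2}$ together with the self-adjointness of $E_{i}$: for real $u$ one has $u^{T} E_{i} u = (E_{i}u)^{T}(E_{i}u) = \lVert E_{i}u \rVert^{2}$, so $u^{T} E_{i} u = 0$ precisely when $E_{i}u = 0$, i.e., when $u$ is orthogonal to $\mathcal{E}_{M}(\mu_{i})$, i.e., when $\mu_{i}$ is not a $u$-main eigenvalue of $M$. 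Combining this with the previous step identifies the set of poles with the set of $u$-main eigenvalues and shows they are all simple.

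The main technical subtlety, which I would flag at the very start of the proof, is the discrepancy between transpose and conjugate transpose. Normality of $M$ guarantees $E_{i}^{\ast} = E_{i}$ but not $E_{i}^{T} = E_{i}$ in general, while the main function $\Gamma_{M}(u;\lambda)$ is defined with a plain transpose. For the graph-theoretic applications driving the paper, $M$ is a real symmetric matrix and $u$ is a real vector, in which case $E_{i}$ is itself real symmetric and the identity $u^{T} E_{i} u = \lVert E_{i}u \rVert^{2}$ is automatic; in a fully complex setting one should instead read $u^{T}$ as $u^{\ast}$ (or restrict to real $u$) for the residue computation to go through verbatim. Aside from this convention point, the argument is a direct application of the spectral theorem and has no other hard step.
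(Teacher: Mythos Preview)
Your proof is correct and follows essentially the same route as the paper: use the spectral decomposition of the normal matrix $M$ to write $u^{T}(\lambda I - M)^{-1}u$ as $\sum_j \frac{u^{T}E_j u}{\lambda-\theta_j}$, then observe that the nonvanishing residues pick out exactly the $u$-main eigenvalues as simple poles. If anything, you are more careful than the paper, which asserts ``$u^{t}E_{\theta_j}u\neq 0$ iff $\theta_j$ is $u$-main'' without spelling out the step $u^{T}E_j u=\lVert E_j u\rVert^{2}$ and without flagging the transpose/conjugate-transpose issue you rightly note.
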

\begin{pf}
Let $\{\theta_1, \theta_2, \cdots, \theta_{k}\}$ be the distinct eigenvalues and let $\{\theta_1,\theta_2,\dots,\theta_{m}\}$ be the set of $u$-main eigenvalues of $M$.

	Suppose the spectral decomposition of $M$ is $M=\Sigma_{j=1}^{k} \theta_j E_{\theta_j}$, where $E_{\theta_j}$ is the orthogonal projection on the eigenspace of $\theta_j$. Then $(\lambda I - M)^{-1}=\Sigma_{j=1}^{k} \frac{E_{\theta_j}}{\lambda-\theta_j}$, and $\Gamma_{M}(u) = u^T(\lambda I - M)^{-1}u=\Sigma_{j=1}^{k} \frac{u^tE_{\theta_j}u}{\lambda-\theta_j}.$ Now, $u^t E_{\theta_j} u \ne 0$ if and only if $\theta_j$ is a $u$-main eigenvalue of $M$. So, $\Gamma_{M}(u)=\Sigma_{j=1}^{m} \frac{u^tE_{\theta_j}u}{\lambda-\theta_j}$ and the result follows.
\end{pf}
\subsection{Proof of Theorem \ref{mainthm}}\label{subsecpfofmainthm}
\begin{proof}
We prove the result by using induction on $k$.
The base case $k=1$ is clear. We prove the result also for $k=2$ for the sake of understanding. 	Now, by Lemma \ref{lemd}, we have
\begin{align*}
\begin{vmatrix}
\lambda I_{n_1}-M_1 & -\rho_{12}u_1v_2^t \\
-\rho_{21}u_2v_1^t & \lambda I_{n_2}- M_2
\end{vmatrix} &= \det(\lambda I_{n_2}- M_2) \det (\lambda - M_1 - \rho_{12}\rho_{21} \Gamma_2 u_1 v_1^t)\\
&= \phi_1 \phi_2 (1-\rho_{12}\rho_{21}\Gamma_2\Gamma_1), \text{by Lemma } \ref{lemid2}(1)\\
&= \phi_1 \phi_2 \begin{vmatrix}
1 & -\rho_{12}\Gamma_1 \\
-\rho_{21}\Gamma_2 & 1
\end{vmatrix}
\end{align*}

This proves the result for the case $k=2$. We assume the result is true for $k-1$. Let $$M=\begin{bmatrix}
	\lambda I_{n_1} - 
	M_1 & -\rho_{12} u_1v_2^t & \cdots  & -\rho_{1k} u_1v_k^t \\
	-\rho_{21} u_2v_1^t & \lambda I_{n_2}-M_2 & \cdots & -\rho_{2k} u_2v_k^t  \\
	\vdots  & \vdots  & \ddots   & \vdots\\
	-\rho_{k1} u_kv_1^t & -\rho_{k2} u_kv_2^t & \cdots \cdots  &\lambda I_{n_k}-M_k
	\end{bmatrix}$$ \\
	Now, by Lemma \ref{lemd}, we have
\begin{equation} \label{pfind}
	\det(M)=\det(\lambda I_{n_k} - M_k)\det(S)
\end{equation}
	$\text{where } S=\begin{bmatrix}
	\lambda I_{n_1} -	M_1 & -\rho_{12} u_1v_2^t & \cdots & -\rho_{1,k-1} u_1v_{k-1}^t \\
	-\rho_{21} u_2v_1^t & \lambda I_{n_2}-M_2 & \cdots & -\rho_{2,k-1} u_2v_{k-1}^t \\
	\vdots & \vdots & \ddots & \vdots  \\
	-\rho_{k-1,1} u_{k-1}v_1^t & -\rho_{k-1,2} u_{k-1}v_2^t & \cdots & \lambda I_{n_{k-1}}-M_{k-1} &
	\end{bmatrix}\\ -\begin{bmatrix}
	-\rho_{1k} u_1v_k^t \\
	-\rho_{2k} u_2v_k^t \\
	\vdots  \\
	-\rho_{k-1,k} u_{k-1}v_k^t
	\end{bmatrix} (\lambda I_{n_k} - M_k)^{-1}
	\begin{bmatrix}
	-\rho_{k1} u_kv_1^t & -\rho_{k2} u_kv_2^t & \cdots & -\rho_{k,k-1} u_{k}v_{k-1}^t
	\end{bmatrix}$\\
	
	$=[s_{ij}]$
	given by $s_{ij}=\begin{cases} \lambda I_{n_i}-M_i - \Gamma_k  \rho_{ik} \rho_{ki} u_iv_i^t \text{ if }i=j\\
	-(\rho_{ij} + \Gamma_k  \rho_{ik} \rho_{kj})  u_iv_j^t \text{ if }i\ne j
	 \end{cases}$	
	
 By Lemma \ref{lemid2},
$\det(
	\lambda I_{n_i}-M_i - \Gamma_k  \rho_{ik} \rho_{ki} u_iv_i^t)=\det(
	\lambda I_{n_i}-M_i)(1-(\Gamma_k  \rho_{ik} \rho_{ki})\Gamma_i)$ and\\ $v_i^t (
	\lambda I_{n_i}-M_i - \Gamma_k  \rho_{ik} \rho_{ki} u_iv_i^t)^{-1} u_i = \dfrac{\Gamma_i}{1 - (\Gamma_k\rho_{ik} \rho_{ki})\Gamma_i}.$
  
	 Applying these results on the induction hypothesis on $S$ we get
\begin{align*}
\det(S)&=\Pi_{i=1}^{k-1}\Bigg(\det( \lambda I_{n_i}-M_i - \Gamma_k  \rho_{ik} \rho_{ki} u_iv_i^t )\dfrac{\Gamma_i}{1 - (\Gamma_k\rho_{ik} \rho_{ki})\Gamma_i}\Bigg)\det(\widetilde{S})\\
&=\Pi_{i=1}^{k-1}\Bigg(\det( \lambda I_{n_i}-M_i)\Gamma_i \Bigg)\det(\widetilde{S})
\end{align*}	 
	  where $\widetilde{S}=[\widetilde{s}_{ij}]$ 	given by $\widetilde{s}_{ij}=\begin{cases} \dfrac{1 - \rho_{ik} \rho_{ki} \Gamma_k\Gamma_i}{\Gamma_i} \text{ if }i=j\\
	-(\rho_{ij} + \Gamma_k  \rho_{ik} \rho_{kj}) \text{ if }i\ne j
	 \end{cases}$
	 
	Therefore $\det(S)=\phi_1 \phi_2 \cdots \phi_{k-1} \times \\
	\begin{vmatrix}
	1-(\rho_{1k}\rho_{k1}\Gamma_k \Gamma_1) & -(\rho_{12} + \Gamma_k  \rho_{1,k} \rho_{k2}) \Gamma_1 &\cdots& -(\rho_{1k-1}+ \Gamma_k  \rho_{1,k}\rho_{k,k-1}) \Gamma_1 \\
	-(\rho_{21}  + \Gamma_k  \rho_{2,k} \rho_{k1}) \Gamma_2 & 1-(\rho_{2k}\rho_{k2}\Gamma_k \Gamma_2) &\cdots& -(\rho_{2k-1} + \Gamma_k  \rho_{2,k}\rho_{k,k-1}) \Gamma_2  \\
	\vdots & \vdots & \ddots & \vdots \\
	-(\rho_{k-1,1} + \Gamma_k  \rho_{k-1,k}\rho_{k1}) \Gamma_{k-1} & -(\rho_{k-1,2} + \Gamma_k \rho_{k-1,k}\rho_{k2}) \Gamma_{k-1}&\cdots  & 1-(\rho_{k-1,k}\rho_{k,k-1}\Gamma_k \Gamma_{k-1})
	\end{vmatrix}$
		$=\phi_1 \phi_2 \cdots \phi_{k-1} \times \bigg( 
	\begin{vmatrix}
	1 & -\rho_{12} \Gamma_1 &\cdots& -\rho_{1k-1} \Gamma_1 \\
	-\rho_{21}  \Gamma_2 & 1 &\cdots& -\rho_{2k-1} \Gamma_2  \\
	\vdots & \vdots & \vdots  \\
	-\rho_{k-1,1} \Gamma_{k-1} & -\rho_{k-1,2} \Gamma_{k-1}&\cdots  & 1
	\end{vmatrix}$ \\
	$- \begin{vmatrix}
	-\rho_{1k}\rho_{k1}\Gamma_k \Gamma_1 & -   \rho_{1,k} \rho_{k2} \Gamma_k\Gamma_1 &\cdots& - \rho_{1,k}\rho_{k,k-1} \Gamma_k\Gamma_1 \\
	- \rho_{2,k} \rho_{k1} \Gamma_k \Gamma_2 & - \rho_{2k}\rho_{k2}\Gamma_k \Gamma_2 &\cdots& - \rho_{2,k}\rho_{k,k-1} \Gamma_k \Gamma_2  \\
	\vdots & \vdots & \vdots  \\
	-  \rho_{k-1,k}\rho_{k1} \Gamma_k \Gamma_{k-1} & -  \rho_{k-1,k}\rho_{k2} \Gamma_k \Gamma_{k-1}&\cdots  & -\rho_{k-1,k}\rho_{k,k-1}\Gamma_k \Gamma_{k-1}
	\end{vmatrix} \bigg)\\
	=\phi_1 \phi_2 \cdots \phi_{k-1} \times 
	\begin{vmatrix}
	1 & -\rho_{12} \Gamma_1 &\cdots & -\rho_{1,k-1} \Gamma_1 & -\rho_{1k} \Gamma_1 \\
	-\rho_{21} \Gamma_2 & 1 &\cdots& -\rho_{2,k-1} \Gamma_2 & -\rho_{2k} \Gamma_2  \\
	\vdots & \vdots & \vdots & \vdots & \vdots\\
	-\rho_{k-1,1} \Gamma_{k-1} & -\rho_{k-1,2} \Gamma_{k-1}&\cdots  & 1 & -\rho_{k-1,k} \Gamma_{k-1} \\
	-\rho_{k,1} \Gamma_{k} & -\rho_{k,2} \Gamma_{k}&\cdots  & -\rho_{k,k-1} \Gamma_k & 1
	\end{vmatrix}$, again by Lemma \ref{lemd}.

By substituting this $\det(S)$ value in Equation \eqref{pfind}, we get the required result for $k$. 
This completes the proof of Theorem \ref{mainthm}. 
\end{proof}

Suppose the matrices $M_i$'s are normal and $\{\theta_1,\theta_2,\dots,\theta_{m_i}\}$ is the set of distinct $u_i$-main eigenvalues of $M_i$, for $1 \le i \le k$. Then as discussed in the proof of Lemma \ref{uev}, we can write 
\begin{equation}\label{defnfg}
\Gamma_i=\dfrac{f_i}{g_i}  \text{ where } g_i=\prod_{j=1}^{m_i}(\lambda - \theta_j).
\end{equation}
Hence by the Theorem \ref{mainthm},
\begin{equation}\label{maineqn1}
\det(\lambda I - A(\bold M, \bold u, \rho)) = (\frac{\phi_1}{g_1}) \dots (\frac{\phi_k}{g_k}) \Phi(\lambda)
\end{equation} 
where  $\Phi(\lambda)=\begin{vmatrix}
g_1(\lambda) & -\rho_{12}f_1(\lambda)  & \cdots & -\rho_{1,k}f_1(\lambda)  \\
-\rho_{21}f_2(\lambda)  & g_2(\lambda) & \cdots & -\rho_{2,k}f_2(\lambda) \\
\vdots & \vdots & \ddots & \vdots  \\
-\rho_{k1}f_k(\lambda) & -\rho_{k2}f_k(\lambda) & \cdots &g_k(\lambda)
\end{vmatrix} $\\
So we can describe the spectrum of $A(\bold M, \bold u, \rho) $ as follows. 
\begin{thm}\label{mainthm2}
Consider the notations defined above. Suppose the matrices $M_i$'s are normal, then 
\begin{itemize}
\item Every eigenvalue, which is not a $u_i$-main eigenvalue of $M_i$, say $\lambda$ with multiplicity $m(\lambda)$ is an eigenvalue of $A(\bold M, \bold u, \rho) $ with multiplicity $m(\lambda)$.
\item Every $u_i$- main eigenvalue of $M_i$, say $\lambda$ with multiplicity $m(\lambda)$ is an eigenvalue of $A(\bold M, \bold u, \rho) $ with multiplicity $m(\lambda)-1$.
\item Remaining eigenvalues are the roots of the polynomial $\Phi(\lambda).$
\end{itemize}
\end{thm}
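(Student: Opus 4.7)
The plan is to read off Theorem~\ref{mainthm2} directly from the factorization in equation~\eqref{maineqn1}, using Lemma~\ref{uev} to identify precisely which factors $g_i$ appear in the denominators of the $\Gamma_i$'s.

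First I would invoke Lemma~\ref{uev}: since each $M_i$ is normal, the rational function $\Gamma_i(\lambda)=v_i^{t}(\lambda I-M_i)^{-1}u_i$ (taken with $v_i=u_i$, or more generally when $M_i$ is normal so the spectral decomposition applies) has as its poles exactly the distinct $u_i$-main eigenvalues of $M_i$, and each such pole is simple. This justifies writing $\Gamma_i=f_i/g_i$ with $g_i(\lambda)=\prod_{j=1}^{m_i}(\lambda-\theta_j^{(i)})$ running over the distinct $u_i$-main eigenvalues. Because each $\theta_j^{(i)}$ is in particular an eigenvalue of $M_i$, we have $g_i\mid\phi_i$, so $\phi_i/g_i$ is an honest polynomial.

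Next I would analyze the polynomial $\phi_i/g_i$. Writing $\phi_i(\lambda)=\prod_\mu(\lambda-\mu)^{m(\mu)}$, dividing out $g_i$ strips off exactly one factor $(\lambda-\theta_j^{(i)})$ for each distinct $u_i$-main eigenvalue. Hence $\phi_i/g_i$ has every non-$u_i$-main eigenvalue of $M_i$ as a root with its full multiplicity $m(\mu)$, and every $u_i$-main eigenvalue as a root with multiplicity $m(\mu)-1$. These are precisely the multiplicities claimed in the first two bullet points of the theorem.

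I would then verify the degree count so that the third bullet point accounts for everything left. Each diagonal entry $g_i$ of the matrix defining $\Phi$ has degree $m_i$, and each off-diagonal entry $-\rho_{ij}f_i$ has degree at most $m_i-1$ because $\Gamma_i(\lambda)\to 0$ as $\lambda\to\infty$, forcing $\deg f_i<\deg g_i$. Consequently, in the Leibniz expansion of $\Phi(\lambda)$, the identity permutation is the unique term of top degree $\sum_i m_i$, and it contributes $\prod_i g_i$; every other term has strictly smaller degree. Thus $\Phi(\lambda)$ is monic of degree $\sum_i m_i$. Combining,
\[
\deg\!\Bigl(\prod_i(\phi_i/g_i)\Bigr)+\deg\Phi=\sum_i(n_i-m_i)+\sum_i m_i=\sum_i n_i,
\]
matching the order of $A(\bold M,\bold u,\rho)$.

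The conclusion then follows: equation~\eqref{maineqn1} expresses $\det(\lambda I-A(\bold M,\bold u,\rho))$ as the product of $\prod_i(\phi_i/g_i)$ (which supplies exactly the multiplicities described in the first two bullet points) and $\Phi(\lambda)$ (whose roots give the remaining eigenvalues with multiplicity). The main subtlety, and really the only one, is justifying the description of $g_i$, which is exactly the content of Lemma~\ref{uev}; the rest is bookkeeping of multiplicities and a degree check. Note the theorem does not assert that roots of $\Phi$ are disjoint from the eigenvalues of the $M_i$, so no additional argument of that kind is needed.
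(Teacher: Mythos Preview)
Your proposal is correct and follows essentially the same approach as the paper: the paper's proof is just two sentences, invoking Lemma~\ref{uev} to identify the simple poles of each $\Gamma_i$ as the $u_i$-main eigenvalues and then asserting that the result follows from equation~\eqref{maineqn1}. You have simply spelled out the bookkeeping (the analysis of $\phi_i/g_i$ and the degree count for $\Phi$) that the paper leaves implicit under ``easily follows.''
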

\begin{proof}
By Lemma \ref{uev} the poles of $\Gamma_i$ are $u_i$- main eigenvalues and they are simple. Now 
the proof easily follows from Equation \eqref{maineqn1}.
\end{proof}
\section{Universal spectra of the H-join of graphs}\label{secappuni}
In this section, by applying Theorem \ref{mainthm}, we obtain the results on characteristic polynomial and spectrum of the universal adjacency matrix of $H$-join of graphs. 

 Consider a graph $H$ on $k$ vertices and a family of graphs $\mathcal{F}=\{G_1,G_2, \dots, G_k\}$. Let $G=\displaystyle \bigvee_{H}\mathcal{F}$ be the $H$-join of graphs in $\mathcal{F}$, and let $n_i$, $A_i$ and $D_i$  be the number of vertices, the adjacency matrix and the degree matrix of the graph $G_i$ respectively, for $1\le i \le k$. Also let $\rho_{ij}$ be the scalars defined by $\rho_{ij} = \rho_{ji}= 1 \text{ if}$ $ij \in E(H)$ and  $0 \text{ otherwise}$, for $1\le i,j \le k$ and $i \ne j$.  Once and for all we fix an ordering of the vertices of $G$, such that the adjacency matrix of the graph $G$ is given as  
 \begin{equation} \label{AofG}
 A(G) = \begin{bmatrix}
A_1  & \rho_{1,2} \textbf{1}_{n_1}\textbf{1}_{n_2}^t & \cdots & \rho_{1,k} \textbf{1}_{n_1} \textbf{1}_{n_k}^t \\
\rho_{2,1} \textbf{1}_{n_2} \textbf{1}_{n_1}^t & A_2 & \cdots & \rho_{2,k} \textbf{1}_{n_2} \textbf{1}_{n_k}^t \\
\vdots & \vdots & \ddots & \vdots  \\
\rho_{k,1} \textbf{1}_{n_k} \textbf{1}_{n_1}^t & \rho_{k,2} \textbf{1}_{n_k} \textbf{1}_{n_2}^t & \cdots & A_k
\end{bmatrix}.
  \end{equation}

\subsection{Universal spectra of the $H$-join of graphs}\label{subsecushjoin}
The proof of the following lemma is immediate from the definition of the $H$-join of graphs. 
\begin{lem}
	Let $H$ be a graph with vertex set $\{v_1,\dots,v_k\}$ and  $\mathcal{F}=\{G_1,G_2, \dots, G_k\}$ be a family of $k$ graphs such that $V(G_i)=\{v_{1}^{(i)},\dots,v_{n_i}^{(i)}\}$ for $1 \le i \le k$. Then the degree of the vertex $v_{j}^{(i)}$ in $G$ is given by  $$\deg_{G}(v_{j}^{(i)}) = \deg_{G_i}(v_{j}^{(i)}) + w_i, 1 \le i \le k, 1 \le j \le n_i$$ where $w_i = \displaystyle \sum_{v_l \in N_H(v_i)}n_l$ . 
\end{lem}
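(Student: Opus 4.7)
The plan is to argue directly from the definition of the $H$-join operation recalled in the introduction, since the statement is purely combinatorial and involves no algebra on the adjacency matrix. The only ingredient I will need is the explicit description
$$E\bigl(\bigvee_{H}\mathcal{F}\bigr) = \Bigl(\bigcup_{i=1}^k E(G_i)\Bigr) \cup \Bigl(\bigcup_{v_iv_j \in E(H)} \{xy : x \in V(G_i),\, y \in V(G_j)\}\Bigr),$$
which tells me exactly which vertices are adjacent to a prescribed vertex $v_j^{(i)}$ in $G = \bigvee_H \mathcal{F}$.

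First, I would fix a vertex $v_j^{(i)} \in V(G_i) \subseteq V(G)$ and partition its neighbourhood in $G$ according to which block $V(G_l)$ the neighbour lies in. For $l = i$, the above description of $E(\bigvee_H \mathcal{F})$ shows that a vertex $v_{j'}^{(i)} \in V(G_i)$ is adjacent to $v_j^{(i)}$ in $G$ if and only if it is adjacent to $v_j^{(i)}$ in $G_i$, because the second family of edges in the union involves only pairs $x,y$ lying in distinct blocks. Therefore the contribution to $\deg_G(v_j^{(i)})$ from $V(G_i)$ is exactly $\deg_{G_i}(v_j^{(i)})$.

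Next, for $l \neq i$, the description shows that \emph{every} vertex of $V(G_l)$ is adjacent to $v_j^{(i)}$ when $v_iv_l \in E(H)$, and \emph{no} vertex of $V(G_l)$ is adjacent to $v_j^{(i)}$ otherwise. Hence the number of neighbours of $v_j^{(i)}$ lying outside $V(G_i)$ equals
$$\sum_{l \neq i,\ v_l \in N_H(v_i)} n_l \;=\; \sum_{v_l \in N_H(v_i)} n_l \;=\; w_i,$$
where the middle equality uses that $H$ has no loops so $v_i \notin N_H(v_i)$. Summing the two contributions gives $\deg_G(v_j^{(i)}) = \deg_{G_i}(v_j^{(i)}) + w_i$, as required.

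There is really no obstacle here: the lemma is a bookkeeping statement and the proof is a one-line unpacking of the definition of $\bigvee_H \mathcal{F}$, so a concise two-step argument (intra-block edges give $\deg_{G_i}(v_j^{(i)})$, inter-block edges give $w_i$) suffices.
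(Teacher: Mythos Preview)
Your proof is correct and matches the paper's approach: the paper simply states that the lemma is immediate from the definition of the $H$-join of graphs, and your argument is precisely that unpacking of the definition into intra-block and inter-block contributions.
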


Let $U(G) = \alpha A(G) + \beta I_n + \gamma J_n + \delta D(G)$ with $\alpha \ne 0$ be the universal adjacency matrix of the graph $G$, where $n=\Sigma_{i=1}^k n_i$. Let $U_i = \alpha A_i + \beta I_{n_i} + \gamma J_{n_i} + \delta D_i$ be the universal adjacency matrix of the graph $G_i$, for $1 \le i \le k$.  Therefore by the Equation \eqref{AofG} the universal adjacency matrix of $G$ can be written as follows:
\begin{equation} \label{UAofG}
U(G)=\begin{bmatrix}
U_1 +  \delta w_1 I_{n_1}  & (\rho_{1,2} \alpha +\gamma)\textbf{1}_{n_1}\textbf{1}_{n_2}^t & \cdots & (\rho_{1,k}\alpha +\gamma)\textbf{1}_{n_1}\textbf{1}_{n_k}^t\\
 (\rho_{2,1}\alpha +\gamma)\textbf{1}_{n_2}\textbf{1}_{n_1}^t & U_2 +  \delta w_2 I_{n_2} & \cdots & (\rho_{2,k}\alpha +\gamma)\textbf{1}_{n_2}\textbf{1}_{n_k}^t\\
\vdots & \vdots & \ddots & \vdots  \\
(\rho_{k,1}\alpha +\gamma)\textbf{1}_{n_k}\textbf{1}_{n_1}^t& (\rho_{k,2}\alpha +\gamma)\textbf{1}_{n_k}\textbf{1}_{n_2}^t& \cdots & U_k +  \delta w_k I_{n_k}
\end{bmatrix}
\end{equation}

In the following theorem, we obtain the characteristic polynomial of universal adjacency matrix $U(G)$. 
\begin{thm}\label{chUofG}
Let $H$ be a graph on $k$ vertices and  $\mathcal{F}=\{G_1,G_2, \dots, G_k\}$ be a family of any $k$ graphs. Consider the graph $G=\displaystyle \bigvee_{H}\mathcal{F}$. Let $\phi_{i}(\lambda)$ be the characteristic polynomial of $U_i$ and $\Gamma_{i}(\lambda)=\Gamma_{U_i}(\textbf{1}_{n_i};\lambda)$. Then we have the following.

i) The characteristic polynomial of the universal adjacency matrix $U(G)$ given in Equation \eqref{UAofG} is
$$\phi_{U(G)}(\lambda) = \displaystyle \Pi_{i=1}^k \phi_i(\lambda-\delta w_i) \Gamma_i(\lambda-\delta w_i) \det(\widetilde{U}(G))$$

\begin{equation}\label{UAdofG}
	\text{where}\,\,    \widetilde{U}(G) =   \begin{bmatrix}
	\frac{1}{\Gamma_1(\lambda-\delta w_1)} & -(\rho_{1,2} \alpha +\gamma)  & \cdots & -(\rho_{1,k} \alpha +\gamma)  \\
	-(\rho_{2,1} \alpha +\gamma)  & \frac{1}{\Gamma_2(\lambda-\delta w_2)} & \cdots & -(\rho_{2,k} \alpha +\gamma) \\
	\vdots & \vdots & \ddots & \vdots  \\
	-(\rho_{k,1} \alpha +\gamma) & -(\rho_{k,2} \alpha +\gamma) & \cdots &\frac{1}{\Gamma_{k}(\lambda-\delta w_k)}
	\end{bmatrix}
\end{equation}			
	
ii) Analogous to the Equations \eqref{defnfg} and \eqref{maineqn1}, we define $f_i, g_i$, and $\Phi(\lambda)$  corresponding to the main eigenvalues of $U_i$ for $1 \le i \le k$. Then the spectrum of $G$ is given as below. 
\begin{itemize}
\item For every eigenvalue $\mu$ of $A_i$ with multiplicity $m(\mu)$, which is not a main eigenvalue,   $\mu+\delta w_i$  is an eigenvalue of $G$ with multiplicity $m(\mu)$.
\item For every main eigenvalue $\mu$ of $A_i$ with multiplicity $m(\mu)$, $\mu+\delta w_i$  is an eigenvalue of $G$ with multiplicity $m(\mu)-1$.
\item Remaining eigenvalues are the roots of the polynomial $\Phi(\lambda)$.
\end{itemize}
\end{thm}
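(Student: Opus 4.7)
The plan is to derive both parts as a direct application of Theorem \ref{mainthm} (for part (i)) and Theorem \ref{mainthm2} (for part (ii)). The first step is to recognize the block form \eqref{UAofG} of $U(G)$ as an instance of the matrix $A(\bold M, \bold u, \rho)$ from Theorem \ref{mainthm}. Because $\beta I_n$ and $\delta D(G)$ contribute only to the diagonal blocks, while the $(i,j)$-block of $\gamma J_n$ is $\gamma \textbf{1}_{n_i}\textbf{1}_{n_j}^t$ and the $(i,j)$-block of $\alpha A(G)$ is $\alpha \rho_{ij}\textbf{1}_{n_i}\textbf{1}_{n_j}^t$ for $i\ne j$, each off-diagonal block of $U(G)$ has the required rank-one form $(\rho_{ij}\alpha+\gamma)\textbf{1}_{n_i}\textbf{1}_{n_j}^t$. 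Accordingly, in Theorem \ref{mainthm} I would set
\[
M_i := U_i + \delta w_i I_{n_i},\qquad u_i := v_i := \textbf{1}_{n_i},\qquad \sigma_{ij} := \rho_{ij}\alpha+\gamma,
\]
and then $U(G)$ is literally $A(\bold M,\bold u,\sigma)$.

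Two small calculations feed into \eqref{maineqn}. Since $M_i$ is a scalar-matrix shift of $U_i$, one has
\[
\det(\lambda I_{n_i}-M_i)=\det((\lambda-\delta w_i)I_{n_i}-U_i)=\phi_i(\lambda-\delta w_i),
\]
and from $(\lambda I - M_i)^{-1}=((\lambda-\delta w_i)I-U_i)^{-1}$,
\[
\Gamma_{M_i}(\textbf{1}_{n_i};\lambda)=\Gamma_i(\lambda-\delta w_i).
\]
Substituting these two identities, together with $\sigma_{ij}=\rho_{ij}\alpha+\gamma$, into the formula \eqref{maineqn} of Theorem \ref{mainthm} produces the expression for $\phi_{U(G)}(\lambda)$ in part (i), with the $k\times k$ quotient matrix becoming exactly $\widetilde{U}(G)$ as in \eqref{UAdofG}.

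For part (ii) I would invoke Theorem \ref{mainthm2}. Since $A_i$, $I_{n_i}$, $J_{n_i}$ and $D_i$ are all symmetric, $U_i$ is symmetric and hence so is $M_i=U_i+\delta w_i I_{n_i}$, so Theorem \ref{mainthm2} applies to $A(\bold M,\bold u,\sigma)$. A scalar-matrix shift preserves eigenvectors, so the spectrum of $M_i$ is obtained from that of $U_i$ by the translation $\mu\mapsto\mu+\delta w_i$, and an eigenvalue $\mu$ of $U_i$ is $\textbf{1}_{n_i}$-main if and only if $\mu+\delta w_i$ is $\textbf{1}_{n_i}$-main for $M_i$, with the same multiplicity. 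Translating the three bullets of Theorem \ref{mainthm2} through this shift (and using the analogues of \eqref{defnfg}--\eqref{maineqn1} for the $U_i$) yields the stated description. There is no genuine obstacle here beyond careful bookkeeping of the shift $\delta w_i$, which must be absorbed consistently in the eigenvalues of $M_i$ and in the arguments of $\phi_i$ and $\Gamma_i$; the substantive work has already been done in Theorems \ref{mainthm} and \ref{mainthm2}.
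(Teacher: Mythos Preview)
Your proposal is correct and matches the paper's proof essentially step for step: the paper also sets $P_i=U_i+\delta w_i I_{n_i}$, $u_i=v_i=\textbf{1}_{n_i}$, and $\widehat\rho_{ij}=\rho_{ij}\alpha+\gamma$, computes $\phi_{P_i}(\lambda)=\phi_i(\lambda-\delta w_i)$ and $\Gamma_{P_i}(\lambda)=\Gamma_i(\lambda-\delta w_i)$, and then invokes Theorem~\ref{mainthm} for part (i) and Theorem~\ref{mainthm2} (together with the observation that a scalar shift preserves main eigenvalues, which the paper phrases via Lemma~\ref{polymain}) for part (ii).
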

\begin{pf} 
For each $1\le i \le k$, let $P_i=U_i +  \delta w_i I_{n_i}$. Then  we have the following relations,\\
 $$\phi_{P_i}(\lambda)=\det(\lambda I_{n_i}-(U_i +  \delta w_i I_{n_i}))=\phi_{U_i}(\lambda-\delta w_i) \,\,\text{and}$$  $$\Gamma_{P_i}(\lambda)=\textbf{1}_{n_i}^T(\lambda I_{n_i}-(U_i +  \delta w_i I_{n_i}))^{-1}\textbf{1}_{n_i}=\Gamma_{U_i}(\lambda-\delta w_i).$$

Let $\widehat{\rho}_{ij}=\widehat{\rho}_{ji}=\rho_{ij} \alpha +\gamma$ for $1\le i<j \le k$.
Considering the triplet $(\bold M, \bold u, \widehat{\rho} )$, given by $$\bold M = (P_1,P_2,\dots,P_k), \bold u = ( \bold 1_{n_1}, \bold 1_{n_2}\dots, \bold 1_{n_k})\, \text{and}$$
$$\widehat{\rho}=(\widehat{\rho}_{12},\dots,\widehat{\rho}_{1k},\widehat{\rho}_{23},\dots,\widehat{\rho}_{2k},\dots,\widehat{\rho}_{k-1,k} )$$
we can write the matrices in the Equations \eqref{UAofG} and \eqref{UAdofG} as $U(G)=A(\bold M, \bold u, \widehat{\rho})$ and $\widetilde{U}(G)=\widetilde{A}(\bold M, \bold u, \widehat{\rho})$. 
Now using Theorem \ref{mainthm} the proof of (i) follows. By Lemma \ref{polymain}, $\mu$ is not a main eigenvalue of $U_i$ if and only if $\mu + \delta w_i$ is not a main eigenvalue of $P_i$. Now by Theorem \ref{mainthm2} the proof of (ii) follows.
\end{pf}

\begin{cor} \label{specUofG}
Consider the notations defined in Theorem \ref{chUofG}. Suppose the graph $G_i$ is $r_i$-regular for $1 \le i \le k$. Then $p_i=\alpha r_i + \beta + \gamma n_i + \delta(r_i+w_i)$ is an eigenvalue of $P_i=U_i +  \delta w_i I_{n_i}$ and  
$$spec(U(G)) = \bigg(\bigcup_{i=1}^k \big( spec(P_i)\backslash p_i \big) \bigg)\cup spec(\widetilde{U'}(G))$$
where $\widetilde{U'}(G)= \begin{bmatrix}
	p_1 & \sqrt{n_1n_2}\widehat{\rho}_{12}  & \cdots & \sqrt{n_1n_k}\widehat{\rho}_{1k}  \\
	\sqrt{n_2n_1}\widehat{\rho}_{21}  & p_2 & \cdots & \sqrt{n_2n_k}\widehat{\rho}_{2k} \\
	\vdots & \vdots & \ddots & \vdots  \\
	\sqrt{n_kn_1}\widehat{\rho}_{k1} & \sqrt{n_kn_2}\widehat{\rho}_{k2} & \cdots &p_k
	\end{bmatrix}.  $
\end{cor}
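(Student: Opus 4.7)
The plan is to deduce this corollary directly from Theorem \ref{chUofG} (part (ii)) by explicitly identifying the $\mathbf{1}_{n_i}$-main eigenvalues of $P_i = U_i + \delta w_i I_{n_i}$ under the regularity assumption and evaluating the rational functions $\Gamma_i$ in closed form.

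First I would observe that when $G_i$ is $r_i$-regular, $\mathbf{1}_{n_i}$ is a simultaneous eigenvector of $A_i$ (eigenvalue $r_i$), $D_i$ (eigenvalue $r_i$), $J_{n_i}$ (eigenvalue $n_i$), and $I_{n_i}$. Linearity gives $U_i \mathbf{1}_{n_i} = (\alpha r_i + \beta + \gamma n_i + \delta r_i)\mathbf{1}_{n_i}$, so $P_i \mathbf{1}_{n_i} = p_i \mathbf{1}_{n_i}$ with $p_i$ as defined in the statement. Applying \lemref{evmain} with $u=\mathbf{1}_{n_i}$ and $\|\mathbf{1}_{n_i}\|^2 = n_i$ yields
\[
\Gamma_{U_i}(\mathbf{1}_{n_i};\lambda - \delta w_i) \;=\; \Gamma_{P_i}(\mathbf{1}_{n_i};\lambda) \;=\; \frac{n_i}{\lambda - p_i}.
\]
Moreover, $P_i$ is a real symmetric (hence normal) matrix, so eigenspaces for distinct eigenvalues are mutually orthogonal. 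Since $\mathbf{1}_{n_i}$ lies in the $p_i$-eigenspace, it is orthogonal to every other eigenspace, and therefore $p_i$ is the \emph{only} $\mathbf{1}_{n_i}$-main eigenvalue of $P_i$, regardless of its multiplicity. In the notation of \eqref{defnfg}, this gives $g_i(\lambda)=\lambda-p_i$ and $f_i(\lambda)=n_i$.

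Next I would plug these into the polynomial $\Phi(\lambda)$ appearing in \eqref{maineqn1} for the triple $(\mathbf{M},\mathbf{u},\widehat{\rho})$ used in the proof of \thmref{chUofG}. The resulting matrix has diagonal entries $\lambda-p_i$ and off-diagonal $(i,j)$ entries $-\widehat{\rho}_{ij}\,n_i$. This is not symmetric, but conjugating by the diagonal similarity $D=\diag(\sqrt{n_1},\ldots,\sqrt{n_k})$, i.e.\ replacing $\Phi$ by $D^{-1}\Phi D$, changes the $(i,j)$-entry for $i\neq j$ to $\sqrt{n_j/n_i}(-\widehat{\rho}_{ij}\,n_i) = -\widehat{\rho}_{ij}\sqrt{n_in_j}$ while leaving diagonal entries unchanged. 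Hence
\[
\Phi(\lambda) \;=\; \det(D^{-1}\Phi D) \;=\; \det\bigl(\lambda I_k - \widetilde{U'}(G)\bigr),
\]
so the roots of $\Phi$ are precisely the eigenvalues of $\widetilde{U'}(G)$.

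Finally, applying \thmref{mainthm2} to $U(G)=A(\mathbf{M},\mathbf{u},\widehat{\rho})$ gives the three listed contributions: non-main eigenvalues of $P_i$ (i.e.\ all eigenvalues of $P_i$ other than $p_i$) appear in $\spec(U(G))$ with their full multiplicity; the single main eigenvalue $p_i$ loses exactly one from its multiplicity; and the remaining $k$ eigenvalues are the roots of $\Phi$, which by the previous step are $\spec(\widetilde{U'}(G))$. Together this is exactly the claimed spectral decomposition. The main conceptual point, and the only place where some care is required, is verifying that $p_i$ is the unique $\mathbf{1}_{n_i}$-main eigenvalue of $P_i$; the rest is a direct substitution into \thmref{chUofG} together with the diagonal symmetrization that identifies $\Phi$ with the characteristic polynomial of $\widetilde{U'}(G)$.
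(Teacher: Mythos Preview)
Your proposal is correct and follows essentially the same route as the paper: both arguments observe that regularity makes $\mathbf 1_{n_i}$ an eigenvector of $P_i$ with eigenvalue $p_i$, invoke \lemref{evmain} to obtain $\Gamma_i=n_i/(\lambda-p_i)$, and then perform a diagonal $\sqrt{n_i}$ row/column scaling to identify the remaining factor with $\det(\lambda I_k-\widetilde{U'}(G))$. The only cosmetic difference is that the paper works through the characteristic polynomial formula in \thmref{chUofG}(i) and distributes the $n_i$'s into $\det(\widetilde U(G))$, whereas you go via \thmref{mainthm2} and the $\Phi$-matrix of \eqref{maineqn1}; your added remark that $p_i$ is the \emph{unique} $\mathbf 1_{n_i}$-main eigenvalue (by orthogonality of eigenspaces of a symmetric matrix) is a worthwhile clarification that the paper leaves implicit.
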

\begin{proof}
Clearly $\bold 1_{n_i}$  is an eigenvector of $P_i$ corresponding to the eigenvalue $p_i=\alpha r_i + \beta + \gamma n_i + \delta(r_i+w_i).$ 
Now, by Lemma \ref{evmain}, we have $\Gamma_i = \dfrac{n_i}{\lambda - p_i}$ and so  by Theorem \ref{chUofG}, we get 

$$\phi(U(G)) = \dfrac{\phi_{1}}{\lambda - p_1} \dfrac{\phi_{2}}{\lambda - p_2} \cdots \dfrac{\phi_{k}}{\lambda - p_k} n_1 n_2 \cdots n_{k} \det(\widetilde{U}(G)).  $$
Distributing $n_i$ inside the determinant of $\widetilde{U}(G)$, as $\sqrt{n_i}$ into the $i^{th}$ row and $\sqrt{n_i}$ into the $i^{th}$ column, we can write 
\begin{align*}
n_1 n_2 \cdots n_{k} \det(\widetilde{U}(G))&= \det \begin{bmatrix}
	(\lambda - p_1) & -\sqrt{n_1n_2}\widehat{\rho}_{12}  & \cdots & -\sqrt{n_1n_k}\widehat{\rho}_{1k}  \\
	-\sqrt{n_2n_1}\widehat{\rho}_{21}  & (\lambda - p_2) & \cdots & -\sqrt{n_2n_k}\widehat{\rho}_{2k} \\
	\vdots & \vdots & \ddots & \vdots  \\
	-\sqrt{n_kn_1}\widehat{\rho}_{k1} & -\sqrt{n_kn_2}\widehat{\rho}_{k2} & \cdots &(\lambda - p_k)
	\end{bmatrix}\\
	&=\det (\lambda I_n - \widetilde{U'}(G)).
\end{align*}
Now the proof follows.
\end{proof}

\begin{cor} \label{specUofG2}
Consider the notations defined in Theorem \ref{chUofG}. Suppose $\alpha+\delta=0$. Then $p_i=\beta + \gamma n_i + \delta w_i$ is an eigenvalue of $P_i=U_i +  \delta w_i I_{n_i}$ and  
$$spec(U(G)) = \bigg(\bigcup_{i=1}^k \big( spec(P_i)\backslash p_i \big) \bigg)\cup spec(\widetilde{U'}(G))$$
where $\widetilde{U'}(G)= \begin{bmatrix}
	p_1 & \sqrt{n_1n_2}\widehat{\rho}_{12}  & \cdots & \sqrt{n_1n_k}\widehat{\rho}_{1k}  \\
	\sqrt{n_2n_1}\widehat{\rho}_{21}  & p_2 & \cdots & \sqrt{n_2n_k}\widehat{\rho}_{2k} \\
	\vdots & \vdots & \ddots & \vdots  \\
	\sqrt{n_kn_1}\widehat{\rho}_{k1} & \sqrt{n_kn_2}\widehat{\rho}_{k2} & \cdots &p_k
	\end{bmatrix}.  $
\end{cor}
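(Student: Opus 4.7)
The plan is to reduce Corollary \ref{specUofG2} to a direct application of Theorem \ref{chUofG} by showing that the hypothesis $\alpha+\delta=0$ forces $\mathbf{1}_{n_i}$ to be an eigenvector of each $P_i$, with eigenvalue exactly $p_i=\beta+\gamma n_i+\delta w_i$. Once this is established, the argument runs along the same lines as the proof of Corollary \ref{specUofG}.

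First I would rewrite $U_i$ in a convenient form. Under the assumption $\alpha+\delta=0$, we have $\delta=-\alpha$, so
\[
U_i \;=\; \alpha A_i+\beta I_{n_i}+\gamma J_{n_i}+\delta D_i \;=\; \beta I_{n_i}+\gamma J_{n_i}-\alpha(D_i-A_i) \;=\; \beta I_{n_i}+\gamma J_{n_i}-\alpha L_i,
\]
where $L_i=D_i-A_i$ is the Laplacian matrix of $G_i$. Since the all-ones vector lies in the kernel of every Laplacian, $L_i\mathbf{1}_{n_i}=0$, while $I_{n_i}\mathbf{1}_{n_i}=\mathbf{1}_{n_i}$ and $J_{n_i}\mathbf{1}_{n_i}=n_i\mathbf{1}_{n_i}$. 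Adding $\delta w_i I_{n_i}$ then yields
\[
P_i\mathbf{1}_{n_i}=(U_i+\delta w_i I_{n_i})\mathbf{1}_{n_i}=(\beta+\gamma n_i+\delta w_i)\mathbf{1}_{n_i}=p_i\mathbf{1}_{n_i},
\]
so $p_i$ is an eigenvalue of $P_i$ with eigenvector $\mathbf{1}_{n_i}$. This is the only new input; notice that regularity of the $G_i$ is never needed here, because the combination $\alpha A_i+\delta D_i$ collapses to $-\alpha L_i$ precisely when $\alpha+\delta=0$.

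With $\mathbf{1}_{n_i}$ identified as an eigenvector of $P_i$ with eigenvalue $p_i$, Lemma \ref{evmain} gives
\[
\Gamma_i(\lambda)=\Gamma_{P_i}(\mathbf{1}_{n_i};\lambda)=\frac{\|\mathbf{1}_{n_i}\|^2}{\lambda-p_i}=\frac{n_i}{\lambda-p_i}.
\]
Substituting this into the formula of Theorem \ref{chUofG}(i), the factor $\phi_i(\lambda-\delta w_i)\Gamma_i(\lambda-\delta w_i)$ becomes $\phi_{P_i}(\lambda)\cdot n_i/(\lambda-p_i)$, which accounts for the full spectrum of $P_i$ except for a single copy of $p_i$. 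The remaining determinant $\det(\widetilde U(G))$ is then massaged exactly as in Corollary \ref{specUofG}: distribute $\sqrt{n_i}$ into the $i$-th row and $\sqrt{n_i}$ into the $i$-th column to absorb the $n_1n_2\cdots n_k$ factors, converting the $(i,i)$-entry $(\lambda-p_i)/n_i$ into $\lambda-p_i$ and the off-diagonal entry $-\widehat\rho_{ij}=-(\rho_{ij}\alpha+\gamma)$ into $-\sqrt{n_in_j}\,\widehat\rho_{ij}$. The result is $\det(\lambda I-\widetilde{U'}(G))$, yielding the stated spectrum.

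There is essentially no obstacle here; the only step requiring a moment's thought is recognizing that the condition $\alpha+\delta=0$ is exactly what makes the Laplacian appear and thereby makes $\mathbf{1}_{n_i}$ a universal $U_i$-eigenvector regardless of whether $G_i$ is regular. After that observation, the rest is a verbatim repetition of the determinant bookkeeping performed in Corollary \ref{specUofG}.
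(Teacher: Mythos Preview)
Your proposal is correct and follows essentially the same approach as the paper: establish that $\mathbf{1}_{n_i}$ is an eigenvector of $P_i$ with eigenvalue $p_i$ under the hypothesis $\alpha+\delta=0$, and then repeat verbatim the determinant manipulation from Corollary~\ref{specUofG}. The only cosmetic difference is that you recognize $\alpha A_i+\delta D_i=-\alpha L_i$ and invoke $L_i\mathbf{1}_{n_i}=0$, whereas the paper computes $U_i\mathbf{1}_{n_i}$ directly and observes that the degree-vector term carries the coefficient $(\alpha+\delta)$; these are the same computation.
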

\begin{proof}
It is easy to see that $$(\alpha A_i + \beta I_{n_i} + \gamma J_{n_i} + \delta D_i)\bold 1_{n_i}=(\alpha+\delta)\begin{bmatrix}
deg_{G_i}(v_1^{(i)})\\ deg_{G_i}(v_2^{(i)}) \\ \vdots \\ deg_{G_i}(v_{n_i}^{(i)})
\end{bmatrix} + (\beta + \gamma n_i)\bold 1_{n_i}.$$ So $\bold 1_{n_i}$  is an eigenvector of $P_i$ corresponding to the eigenvalue $p_i=\beta + \gamma n_i + \delta w_i.$ 

Then by the same argument as in the previous corollary, the proof follows.
\end{proof}

\begin{rem}
For any graph $G$, the Laplacian matrix $L(G)$, is obtained from the universal adjacency matrix $U(G)$, by taking $(\alpha, \beta, \gamma, \delta)=(-1,0,0,1)$. So, we can find the Laplacian spectra of $H$-join of any graphs from Corollary \ref{specUofG2}.
\end{rem}

Let $H$ be a graph on $k$ vertices and $G'$ be any graph. We recall that the Lexicographic product of graphs $H$ and $G'$, denoted by $H[G']$, is obtained as the $H$-join of graphs in $\mathcal{F}=\{G_1,G_2, \dots, G_k\},$ where $G_i=G'$ for $1 \le i \le k$. In \cite{wng}, the authors obtained the characteristic polynomial of $H[G']$\cite[Theorem 2.4]{wng} and investigated the spectrum in various cases. Now we generalize \cite[Theorem 2.4]{wng} by obtaining the characteristic polynomial of the universal adjacency matrix of $H[G']$ when $\delta=0$. 

\begin{thm} \label{lexthm}
Let $H$ be a graph on $k$ vertices and $G'$ be a graph on $n'$ vertices. Consider the graph $G=H[G']$, the lexicographic product of $H$ and $G'$. Suppose $\spec(H)=\{\lambda_1,\lambda_2, \dots, \lambda_k \}.$ Then the characteristic polynomial of universal adjacency matrix of $U(G)$ when $\delta=0$, is 
$$\phi_{U(G)}(\lambda) = \phi^k(\lambda)\Bigg( \displaystyle \Pi_{i=1}^k (1-\lambda_i \Gamma(\lambda)) \Bigg),$$
where $\phi(\lambda)$ be the characteristic polynomial of $U(G')$ and $\Gamma(\lambda)=\Gamma_{U(G')}(\bold 1_{n'};\lambda)$ when $\delta=0$.
\end{thm}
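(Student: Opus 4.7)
The plan is to obtain this as a direct specialisation of Theorem~\ref{chUofG}. Writing $H[G']=\bigvee_{H}\mathcal{F}$ for the constant family $\mathcal{F}=\{G',G',\dots,G'\}$ consisting of $k$ copies of $G'$, one has $n_i=n'$ and $U_i=U(G')$ for every $i$, so $\phi_i(\lambda)=\phi(\lambda)$ and $\Gamma_i(\lambda)=\Gamma(\lambda)$ for each $i$. The hypothesis $\delta=0$ also removes the diagonal shifts $\delta w_i I_{n_i}$ on the blocks of $U(G)$, so Theorem~\ref{chUofG}(i) collapses to
\[
\phi_{U(G)}(\lambda) \;=\; \phi(\lambda)^{k}\,\Gamma(\lambda)^{k}\,\det\bigl(\widetilde{U}(G)\bigr).
\]

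Next I would identify $\widetilde{U}(G)$ with a scalar shift of the adjacency matrix of $H$. With the adjacency normalisation $\alpha=1$, $\gamma=0$ --- under which $\spec(H)$ denotes exactly $\spec(A(H))=\{\lambda_1,\dots,\lambda_k\}$ --- the off-diagonal entries of $\widetilde{U}(G)$ equal $-(\rho_{ij}\alpha+\gamma)=-\rho_{ij}=-(A(H))_{ij}$, while the diagonal entries are $1/\Gamma(\lambda)$. Consequently
\[
\widetilde{U}(G) \;=\; \frac{1}{\Gamma(\lambda)}\,I_{k}\;-\;A(H).
\]

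Finally, since the eigenvalues of $A(H)$ are $\lambda_1,\dots,\lambda_k$, a diagonalisation gives
\[
\det\bigl(\widetilde{U}(G)\bigr) \;=\; \prod_{i=1}^{k}\!\left(\frac{1}{\Gamma(\lambda)}-\lambda_i\right) \;=\; \Gamma(\lambda)^{-k}\,\prod_{i=1}^{k}\bigl(1-\lambda_i\,\Gamma(\lambda)\bigr),
\]
and substituting this into the first displayed formula cancels the factor $\Gamma(\lambda)^{k}$, leaving exactly $\phi(\lambda)^{k}\prod_{i=1}^{k}\bigl(1-\lambda_{i}\Gamma(\lambda)\bigr)$, as desired. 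I do not anticipate any real obstacle here: the whole argument is a book-keeping consequence of Theorem~\ref{chUofG}, the essential point being that the uniformity of the family $\mathcal{F}$ forces the ``quotient'' matrix $\widetilde{U}(G)$ to be the single scalar shift $(1/\Gamma(\lambda))I_{k}-A(H)$, whose determinant therefore factorises over $\spec(H)$.
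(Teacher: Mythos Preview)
Your argument is correct and is essentially the paper's own proof: apply Theorem~\ref{chUofG} to the constant family, use $\delta=0$ to remove the shifts so that $\phi_i=\phi$ and $\Gamma_i=\Gamma$, recognise $\widetilde U(G)=\Gamma(\lambda)^{-1}I_k-A(H)$, and factor its determinant over $\spec(H)$. The only difference is that you make explicit the normalisation $\alpha=1$, $\gamma=0$ needed for the off-diagonal entries $-(\rho_{ij}\alpha+\gamma)$ of $\widetilde U(G)$ to reduce to $-(A(H))_{ij}$, whereas the paper passes directly to $\det\bigl(\Gamma(\lambda)^{-1}I_k-A(H)\bigr)$ without comment.
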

\begin{proof}
By Theorem \ref{chUofG}, 
\begin{align*}
\phi_{U(G)}(\lambda)&= \phi^k(\lambda)\Gamma^k(\lambda)\det(\frac{1}{\Gamma(\lambda)}I_k-A(H))\\
&=\phi^k(\lambda)\Gamma^k(\lambda)\Bigg(\Pi_{i=1}^k (\frac{1}{\Gamma(\lambda)}-\lambda_i) \Bigg)\\
&=\phi^k(\lambda)\Bigg(\Pi_{i=1}^k (1- \lambda_i \Gamma(\lambda)) \Bigg).\\
\end{align*}
\end{proof}	

\subsection{The generalized characteristic polynomial of the $H$-join of graphs}\label{subsecgench}
The generalized characteristic polynomial of a graph $G$ is introduced in \cite{cvet}, as the bivariate polynomial defined by  $\phi_G(\lambda,t) = \det(\lambda I - (A(G) - t D(G)))$ where $A(G)$ and $D(G)$ are the adjacency and the degree matrix associated to the graph $G$. As mentioned earlier, in \cite[Theorem 3.1]{chen19} the authors obtained a generalization of Fiedler’s lemma, for the matrices with fixed row sum and as an application, they obtained the generalized characteristic polynomial of $H$-join of regular graphs.  In the following theorem, we obtain the generalized characteristic polynomial of $H$-join of any graphs.  

\begin{thm}\label{gchUofG}
Let $H$ be any graph and  $\mathcal{F}=\{G_1,G_2, \dots, G_k\}$ be a family of any $k$ graphs. Consider the graph $G=\displaystyle \bigvee_{H}\mathcal{F}$. Let $M(G)=A(G) - t D(G)$ and $M_i=A_i-t D_i$ for $1 \le i \le k$. Let $\phi_{i}$ be the characteristic polynomial of $M_i$ and $\Gamma_{i}=\Gamma_{M_i}(\textbf{1}_{n_i};\lambda)$. Then

i) The generalized characteristic polynomial of the graph $G$ is 
$$\phi_{M(G)}(\lambda) = \displaystyle \Pi_{i=1}^k \phi_i(\lambda+t w_i) \Gamma_i(\lambda+t w_i) \det(\widetilde{M}(G))$$

	where   $ \widetilde{M}(G) =   \begin{bmatrix}
	\frac{1}{\Gamma_1(\lambda+t w_1)} & -\rho_{12}   & \cdots & -\rho_{1k}  \\
	-\rho_{21}   & \frac{1}{\Gamma_2(\lambda+t w_2)} & \cdots & -\rho_{2k}  \\
	\vdots & \vdots & \ddots & \vdots  \\
	-\rho_{k1} & -\rho_{k2}  & \cdots &\frac{1}{\Gamma_{k}(\lambda+t w_3)}
	\end{bmatrix}$
	
ii)Analogous to the Equations \eqref{defnfg} and \eqref{maineqn1}, we define $f_i, g_i$, and $\Phi(\lambda)$  corresponding to the main eigenvalues of $M_i$ for $1 \le i \le k$. Then the spectrum of $M(G)$ is given as below.  

\begin{itemize}
\item For every eigenvalue $\mu$ of $M_i$ with multiplicity $m(\mu)$, which is not a main eigenvalue,   $\mu-t w_i$  is an eigenvalue of $M(G)$ with multiplicity $m(\mu)$.
\item For every main eigenvalue $\mu$ of $M_i$ with multiplicity $m(\mu)$, $\mu-t w_i$  is an eigenvalue of $M(G)$ with multiplicity $m(\mu)-1$.
\item Remaining eigenvalues are the roots of the polynomial $\Phi(\lambda)$.
\end{itemize}
\end{thm}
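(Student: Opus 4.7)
The plan is to mimic exactly the argument used for Theorem~\ref{chUofG}, replacing the universal-adjacency shift $\delta D$ by $-tD$. First I would record the block decomposition of $M(G) = A(G) - tD(G)$. Since every vertex $v_j^{(i)}$ of $G_i$ has degree $\deg_{G_i}(v_j^{(i)}) + w_i$ inside $G$, the degree matrix $D(G)$ has diagonal blocks $D_i + w_i I_{n_i}$. Combined with the block form of $A(G)$ in \eqref{AofG}, this gives the $i$-th diagonal block of $M(G)$ as
\[
P_i \;:=\; A_i - t(D_i + w_i I_{n_i}) \;=\; M_i - t w_i I_{n_i},
\]
with off-diagonal block $(i,j)$ equal to $\rho_{ij}\mathbf{1}_{n_i}\mathbf{1}_{n_j}^t$.

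Next, I would package this as an instance of the setup of Theorem~\ref{mainthm}. Setting $\bold M = (P_1,\dots,P_k)$, $\bold u = (\mathbf{1}_{n_1},\mathbf{1}_{n_1},\dots,\mathbf{1}_{n_k},\mathbf{1}_{n_k})$, and using the same scalars $\rho_{ij}$ as in Section~\ref{secappuni}, we have $M(G) = A(\bold M, \bold u, \rho)$ and $\widetilde{M}(G) = \widetilde{A}(\bold M, \bold u, \rho)$. A direct shift computation gives
\[
\phi_{P_i}(\lambda) = \phi_{M_i}(\lambda + tw_i), \qquad \Gamma_{P_i}(\mathbf{1}_{n_i};\lambda) = \Gamma_{M_i}(\mathbf{1}_{n_i};\lambda + tw_i),
\]
since $(\lambda I - P_i)^{-1} = ((\lambda + tw_i)I - M_i)^{-1}$. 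Substituting into the conclusion of Theorem~\ref{mainthm} yields part~(i) immediately.

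For part~(ii), observe that each $P_i$ is symmetric (hence normal), because $A_i$ and $D_i$ are symmetric. Lemma~\ref{polymain} applied to the linear polynomial $p(x)=x - tw_i$ tells us that $\mu$ is a main eigenvalue of $M_i$ if and only if $\mu - tw_i$ is a main eigenvalue of $P_i$, and the eigenspaces coincide (so multiplicities are preserved). Invoking Theorem~\ref{mainthm2} for the matrix $A(\bold M, \bold u, \rho) = M(G)$ and translating its three bullet points back through the shift $\mu \leftrightarrow \mu - tw_i$ produces the three cases in the statement, with the remaining eigenvalues being the roots of the polynomial $\Phi(\lambda)$ constructed from the pairs $(f_i,g_i)$ of $M_i$ exactly as in \eqref{defnfg}--\eqref{maineqn1}. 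I do not anticipate any real obstacle; the only point requiring a bit of care is the sign of the shift (we get $\lambda + tw_i$ inside $\phi_i,\Gamma_i$ and $\mu - tw_i$ for the produced eigenvalues), which is opposite in sign to the $\delta$-shift of Theorem~\ref{chUofG} because $M(G)$ has $-tD(G)$ rather than $+\delta D(G)$.
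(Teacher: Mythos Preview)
Your argument is correct and is essentially the same as the paper's, just one layer of abstraction lower. The paper's proof is a single sentence: it observes that $M(G)=A(G)-tD(G)$ is precisely the universal adjacency matrix $U(G)$ with parameters $(\alpha,\beta,\gamma,\delta)=(1,0,0,-t)$, so Theorem~\ref{chUofG} applies verbatim (note $\delta w_i=-tw_i$ gives the shifts $\lambda+tw_i$ and $\mu-tw_i$, and $\hat\rho_{ij}=\rho_{ij}$ since $\gamma=0$). You instead reprove that specialization by going back to Theorem~\ref{mainthm} and Theorem~\ref{mainthm2} directly; this is exactly what the proof of Theorem~\ref{chUofG} does, so the underlying computation is identical, but you could shorten your write-up to a one-line citation of Theorem~\ref{chUofG}.
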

\begin{proof}
The proof is direct from Theorem \ref{chUofG}, by taking $(\alpha, \beta, \gamma, \delta)=(1,0,0,-t)$ in the universal adjacency matrix $U(G).$
\end{proof}

\begin{cor} \cite{chen19} \label{specGAofG}
Suppose the graph $G_i$ is $r_i$-regular for each $1 \le i \le k$. Then $p_i= r_i-t(r_i+w_i)$ is an eigenvalue of $P_i=M_i -t w_i I_{n_i}$ and  
$$spec(M(G)) = \bigg(\bigcup_{i=1}^k \big( spec(P_i)\backslash p_i \big) \bigg)\cup spec(\widetilde{M'}(G))$$
where $\widetilde{U'}(G)= \begin{bmatrix}
	p_1 & \sqrt{n_1n_2}{\rho}_{12}  & \cdots & \sqrt{n_1n_k}{\rho}_{1k}  \\
	\sqrt{n_2n_1}{\rho}_{21}  & p_2 & \cdots & \sqrt{n_2n_k}{\rho}_{2k} \\
	\vdots & \vdots & \ddots & \vdots  \\
	\sqrt{n_kn_1}{\rho}_{k1} & \sqrt{n_kn_2}{\rho}_{k2} & \cdots &p_k.
	\end{bmatrix}.  $
\end{cor}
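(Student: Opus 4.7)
The plan is to specialize Theorem \ref{gchUofG} to the regular case, mirroring the derivation of Corollary \ref{specUofG} from Theorem \ref{chUofG}. First I would observe that when $G_i$ is $r_i$-regular, we have $A_i \bold{1}_{n_i} = r_i \bold{1}_{n_i}$ and $D_i = r_i I_{n_i}$, hence $M_i \bold{1}_{n_i} = (A_i - tD_i)\bold{1}_{n_i} = r_i(1-t) \bold{1}_{n_i}$. Consequently $\bold{1}_{n_i}$ is an eigenvector of $P_i = M_i - tw_i I_{n_i}$ with eigenvalue $p_i = r_i - t(r_i+w_i)$, establishing the first claim of the corollary.

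Next I would evaluate the main function at the shifted argument. Writing
\[
\Gamma_i(\lambda+tw_i) = \bold{1}_{n_i}^t\bigl((\lambda+tw_i)I_{n_i} - M_i\bigr)^{-1}\bold{1}_{n_i} = \bold{1}_{n_i}^t(\lambda I_{n_i} - P_i)^{-1}\bold{1}_{n_i},
\]
and applying Lemma \ref{evmain} to the eigenpair $(p_i,\bold{1}_{n_i})$ of $P_i$, I obtain $\Gamma_i(\lambda+tw_i) = n_i/(\lambda - p_i)$. Also $\phi_i(\lambda+tw_i) = \det((\lambda+tw_i)I_{n_i} - M_i) = \phi_{P_i}(\lambda)$. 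Substituting these identities into Theorem \ref{gchUofG}(i) yields
\[
\phi_{M(G)}(\lambda) = \Bigg(\prod_{i=1}^k \frac{\phi_{P_i}(\lambda)}{\lambda - p_i}\Bigg) \cdot n_1 n_2 \cdots n_k \cdot \det(\widetilde{M}(G)).
\]

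The final step is to absorb the factor $n_1 n_2 \cdots n_k$ into the determinant by multiplying the $i$-th row and the $i$-th column of $\widetilde{M}(G)$ by $\sqrt{n_i}$: the diagonal entry $1/\Gamma_i(\lambda+tw_i) = (\lambda-p_i)/n_i$ scales to $\lambda-p_i$, while the off-diagonal $-\rho_{ij}$ scales to $-\sqrt{n_i n_j}\,\rho_{ij}$. The resulting determinant is precisely $\det(\lambda I_k - \widetilde{M'}(G))$. Since each quotient $\phi_{P_i}(\lambda)/(\lambda-p_i)$ is a polynomial whose roots (with multiplicities) form $\spec(P_i)\setminus\{p_i\}$, the spectral decomposition claimed in the corollary follows. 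There is no substantive obstacle: this is essentially the $(\alpha,\beta,\gamma,\delta)=(1,0,0,-t)$ specialization of the argument for Corollary \ref{specUofG}, and the only mild bookkeeping is tracking the cancellation of the poles at $\lambda = p_i$, which is reflected in the multiplicity drop recorded by the set-minus notation.
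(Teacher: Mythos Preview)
Your proposal is correct and follows exactly the approach the paper indicates: the paper's own proof simply says the result is obtained from Theorem \ref{gchUofG} by the same arguments used in deriving Corollary \ref{specUofG} from Theorem \ref{chUofG}, and you have spelled out precisely those steps (eigenvector identification, Lemma \ref{evmain} for $\Gamma_i$, substitution into the characteristic-polynomial formula, and the $\sqrt{n_i}$ row/column scaling of the determinant).
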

\begin{proof}
The proof is obtained from Theorem \ref{gchUofG} by the similar arguments used in the Corollary \ref{specUofG} of Theorem \ref{chUofG}.
\end{proof}
\begin{rem}
In \cite{chen19}, from the generalized characteristic polynomial, the spectra of the signless Laplacian, the normalized laplacian are deduced for $H$-join of regular graphs.  Similarly, we too can deduce them for $H$-join of any graphs from Theorem \ref{gchUofG}. Also, we can deduce the Seidel spectra of $H$-join of any graphs by taking $(\alpha, \beta, \gamma, \delta)=(-2,-1,1,0)$ in the universal adjacency matrix $U(G)$ in Theorem \ref{chUofG}.
\end{rem}

\section{Spectra of the $H$-generalized join of graphs}\label{subsechgenjoin}
In this section, we obtain the characteristic polynomial of $H$-generalized join of graphs  $\displaystyle \bigvee_{H,\mathcal{S}}\mathcal{F}$ introduced in \cite{cdo13}. 
\begin{defn}
	Let $G$ be any graph. A vertex subset $S$ of a graph $G$ is said to be $(k,\tau)$-regular if $S$ induces a $k$-regular graph in $G$ and  every vertex outside of $S$ has $\tau$ neighbours in $S$. When $G$ is a regular graph, for convenience $S = V(G)$  is considered as $(k,0)$-regular. 
\end{defn}
\begin{defn}
	Let $G$ be any graph with vertex set $\{v_1,v_2,\dots,v_n\}$. For any subset $S\subset V(G)$, the characteristic vector of $S$, denoted by  $\chi_S$, is defined as the 0-1 vector such that $i^{th}$ place of $\chi_S$ is 1 if and only if the vertex $v_i \in S.$ 
\end{defn}
\begin{lem}\cite{cdo13}\label{mainlem}
	Let $G$ be a graph with a $(k,\tau)$-regular set $S$, where $\tau > 0$, and $\lambda \in \sigma(A(G))$. Then, $\lambda$ is not a main eigenvalue if and only if $\lambda = k - \tau$ or $\chi_S \in (\mathcal E_G(\lambda))^{\perp}$.
\end{lem}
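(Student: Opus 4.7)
The plan is to exploit the fact that when $S$ is a $(k,\tau)$-regular set, the vectors $\chi_S$ and $\mathbf{1}_n$ are related through the adjacency matrix $A(G)$ in a very simple way. Specifically, I would first compute $A(G)\chi_S$ by counting neighbors: for a vertex $v\in S$, the entry equals the number of neighbors of $v$ in $S$, which is $k$ since $S$ induces a $k$-regular subgraph; for $v\notin S$, the entry is $\tau$ by the definition of $(k,\tau)$-regularity. Hence
\[
A(G)\chi_S \;=\; k\,\chi_S + \tau(\mathbf{1}_n - \chi_S) \;=\; (k-\tau)\chi_S + \tau\,\mathbf{1}_n.
\]
Since $\tau>0$, I can solve this for $\mathbf{1}_n$, obtaining
\[
\mathbf{1}_n \;=\; \tfrac{1}{\tau}\bigl(A(G) - (k-\tau)I\bigr)\chi_S.
\]

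Next, I would use this identity to translate inner products with $\mathbf{1}_n$ into inner products with $\chi_S$. Fix any $v\in\mathcal{E}_G(\lambda)$. Using symmetry of $A(G)$ and the identity above,
\[
\mathbf{1}_n^{T}v \;=\; \tfrac{1}{\tau}\,\chi_S^{T}\bigl(A(G) - (k-\tau)I\bigr)v \;=\; \tfrac{\lambda-(k-\tau)}{\tau}\,\chi_S^{T}v.
\]
Recall that $\lambda$ fails to be a main eigenvalue precisely when $\mathbf{1}_n \perp \mathcal{E}_G(\lambda)$, i.e.\ when $\mathbf{1}_n^Tv=0$ for every $v\in\mathcal{E}_G(\lambda)$.

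From the displayed relation I would finish by a simple case split. If $\lambda=k-\tau$, the coefficient vanishes, so $\mathbf{1}_n^Tv=0$ for all $v\in\mathcal{E}_G(\lambda)$ and $\lambda$ is automatically non-main. If $\lambda\neq k-\tau$, the scalar $(\lambda-(k-\tau))/\tau$ is nonzero and therefore $\mathbf{1}_n^Tv=0$ if and only if $\chi_S^Tv=0$; hence $\mathbf{1}_n\perp\mathcal{E}_G(\lambda)$ is equivalent to $\chi_S\perp\mathcal{E}_G(\lambda)$, i.e.\ $\chi_S\in(\mathcal{E}_G(\lambda))^{\perp}$. Combining the two cases yields the stated biconditional.

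I do not expect any serious obstacle: the only subtlety is the hypothesis $\tau>0$, which is used precisely to divide by $\tau$ in the expression for $\mathbf{1}_n$, so the identity that drives the whole argument is available. Everything else is a one-line manipulation using that $A(G)$ is symmetric and that $v$ is an eigenvector for $\lambda$.
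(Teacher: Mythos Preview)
Your argument is correct. The key identity $A(G)\chi_S=(k-\tau)\chi_S+\tau\,\mathbf 1_n$ is exactly what one needs, and the case split on whether $\lambda=k-\tau$ cleanly delivers both directions of the biconditional; the hypothesis $\tau>0$ is used precisely where you say.

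As for comparison with the paper: there is nothing to compare against. The paper does not prove this lemma; it merely quotes it from \cite{cdo13} and uses it to motivate the notion of a ``special eigenvalue'' in Section~\ref{subsechgenjoin}. Your write-up is in fact the standard short proof (and essentially the one given in \cite{cdo13}), so it would serve perfectly well as a self-contained justification here.
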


	Fix a $(k,\tau)$-regular subset $S$ of $V(G)$. An eigenvalue  $\lambda \in \sigma(G)$ is said to be \textit{special eigenvalue} if  $\lambda \ne k -\tau $ and $\lambda$ is not a main eigenvalue. Then by Lemma \ref{mainlem}, if $\lambda$ is a special eigenvalue of $G$ then $\lambda$ is not a $\chi_s$-main eigenvalue. In \cite{cdo13} the authors obtained all eigenvalues of $\displaystyle \bigvee_{H,\mathcal{S}}\mathcal{F}$ when $G_i$ is regular and the subsets $S_i \in \mathcal{S}$ are such that $S_i=V(G_i)$ for $1 \le i \le k$, in which case  $\displaystyle \bigvee_{H,\mathcal{S}}\mathcal{F}$ coincides with the $H$-join of regular graphs $\displaystyle \bigvee_{H}\mathcal{F}$. In other cases, it is proved that every special eigenvalue corresponding to $(k_i,\tau_i)$-regular subset $S_i$ is an eigenvalue of $\displaystyle \bigvee_{H,\mathcal{S}}\mathcal{F}$ and thus the subset of eigenvalues is obtained for $\displaystyle \bigvee_{H,\mathcal{S}}\mathcal{F}$.  
	
	In the following theorem, we obtain the characteristic polynomial of $\displaystyle \bigvee_{H,\mathcal{S}}\mathcal{F}$ for any family of subsets $\mathcal{S}$ and obtain the complete set of eigenvalues.
\begin{thm}\label{thmghjoin}
	Consider a graph $H$ of order $k$ and a family of graphs $\mathcal F = \{G_1,\dots,G_k\}$. Consider also a family of vertex subsets $\mathcal S = \{S_1, \dots, S_k\}$, such that	$S_i \in V(G_i)$ for $1 \le i \le k$. Let $G =\displaystyle \bigvee_{H,S} \mathcal F$. Let $n_i$ and $A_i$ be the number of vertices and the adjacency matrix of the graph $G_i$ respectively for $1\le i \le k$. For $1 \le i,j \le k$, let $\rho_{ij}$ be the scalars defined by $\rho_{ij} = 1 \text{ if}$ $ij \in E(H)$ and  $0 \text{ otherwise}$. Then we have the following. \\
	i) The characteristic polynomial of $G$ is
$$\phi_{G}(\lambda) = \displaystyle \Pi_{i=1}^k \phi_i(\lambda) \Gamma_i(\lambda) \det(\widetilde{A}(G))$$
\begin{equation}\label{gjoinAdofG}
	\text{where}\,\,    \widetilde{A}(G) =   \begin{bmatrix}
	\frac{1}{\Gamma_1} & -\rho_{12}  & \cdots & -\rho_{1k}   \\
	-\rho_{21}  & \frac{1}{\Gamma_2} & \cdots & -\rho_{2k} \\
	\vdots & \vdots & \ddots & \vdots  \\
	-\rho_{k1} & -\rho_{k2}  & \cdots &\frac{1}{\Gamma_{k}}.
	\end{bmatrix}
\end{equation}		
where $\phi_i(\lambda)=\det(\lambda I_{n_i}-A(G_i))$ and $\Gamma_i(\lambda)=\Gamma_{A_i}(\chi_{S_{i}};\lambda)$

\noindent ii) Analogous to the Equations \eqref{defnfg} and \eqref{maineqn1}, we define $f_i, g_i$ and $\Phi(\lambda)$  corresponding to the $\chi_{S_{i}}$-main eigenvalues of $G_i$ for $1 \le i \le k$. Then the spectrum of $G$ is given as below. 
\begin{itemize}
\item Every eigenvalue $\mu$ of $A_i$ with multiplicity $m(\mu)$, which is not $\chi_{S_{i}}$-main eigenvalue,  is an eigenvalue of $G$ with multiplicity $m(\mu)$.
\item Every $\chi_{S_{i}}$-main eigenvalue $\mu$ of $A_i$ with multiplicity $m(\mu)$,  is an eigenvalue of $G$ with multiplicity $m(\mu)-1$.
\item Remaining eigenvalues are the roots of the polynomial $\Phi(\lambda).$
\end{itemize}
\end{thm}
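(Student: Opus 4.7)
The plan is to recognize that the adjacency matrix of $G=\bigvee_{H,\mathcal{S}}\mathcal{F}$ fits directly into the template $A(\bold M,\bold u,\rho)$ of Theorem \ref{mainthm}, and then read off both assertions from Theorem \ref{mainthm} and Theorem \ref{mainthm2}.

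First, I would order the vertices of $G$ block-by-block according to $V(G_1),V(G_2),\dots,V(G_k)$, so that within each block the internal adjacencies are exactly those of $G_i$ (giving the diagonal block $A_i$). By the definition of the $H$-generalized join, a vertex $x\in V(G_i)$ is adjacent in $G$ to a vertex $y\in V(G_j)$ (with $i\neq j$) if and only if $v_iv_j\in E(H)$, $x\in S_i$, and $y\in S_j$. Hence the off-diagonal $(i,j)$ block is $\rho_{ij}\,\chi_{S_i}\chi_{S_j}^{t}$. This gives precisely
\[
A(G)=A(\bold M,\bold u,\rho),\qquad \bold M=(A_1,\dots,A_k),\ \bold u=(\chi_{S_1},\chi_{S_1},\chi_{S_2},\chi_{S_2},\dots,\chi_{S_k},\chi_{S_k}),
\]
with the scalars $\rho_{ij}$ as in the statement and $u_i=v_i=\chi_{S_i}$.

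With this identification in place, part (i) is an immediate application of Theorem \ref{mainthm}: the characteristic polynomial is
\[
\phi_G(\lambda)=\prod_{i=1}^k \phi_i(\lambda)\,\Gamma_i(\lambda)\,\det(\widetilde A(G)),
\]
with $\phi_i(\lambda)=\det(\lambda I_{n_i}-A_i)$ and $\Gamma_i(\lambda)=\chi_{S_i}^{t}(\lambda I_{n_i}-A_i)^{-1}\chi_{S_i}=\Gamma_{A_i}(\chi_{S_i};\lambda)$, which matches \eqref{gjoinAdofG}.

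For part (ii), I would invoke Theorem \ref{mainthm2}, which requires the matrices $M_i$ to be normal; here each $A_i$ is real symmetric, so this hypothesis holds. Define $f_i,g_i$ via \eqref{defnfg} using the $\chi_{S_i}$-main eigenvalues of $A_i$, and form $\Phi(\lambda)$ as in \eqref{maineqn1}. Theorem \ref{mainthm2} then yields exactly the three-part description of $\spec(G)$: every eigenvalue $\mu$ of $A_i$ that is \emph{not} $\chi_{S_i}$-main contributes with its full multiplicity $m(\mu)$, every $\chi_{S_i}$-main eigenvalue contributes with multiplicity $m(\mu)-1$ (since, by Lemma \ref{uev}, the poles of $\Gamma_i$ are simple and sit at the $\chi_{S_i}$-main eigenvalues, so the factor $\phi_i/g_i$ cancels exactly one copy of each such eigenvalue), and the remaining eigenvalues are the roots of $\Phi(\lambda)$. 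No further computation is required; the only conceptual point to verify is the block form of $A(G)$, which is why I would emphasize it first. There is no real obstacle here—the whole argument is a clean specialization of the general framework to $M_i=A_i$ and $u_i=v_i=\chi_{S_i}$.
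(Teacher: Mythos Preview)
Your proposal is correct and follows essentially the same approach as the paper: write the adjacency matrix of $G$ in block form with diagonal blocks $A_i$ and off-diagonal blocks $\rho_{ij}\chi_{S_i}\chi_{S_j}^t$, identify it as $A(\bold M,\bold u,\rho)$ with $M_i=A_i$ and $u_i=v_i=\chi_{S_i}$, and then apply Theorems~\ref{mainthm} and~\ref{mainthm2} directly. The paper's proof is in fact terser than yours; your added remarks on why the $A_i$ are normal and on Lemma~\ref{uev} are fine but not strictly needed beyond what Theorem~\ref{mainthm2} already encapsulates.
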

\begin{proof}
By the definition of $\bigvee_{(H,S)} \mathcal F$, the adjacency matrix of $G$ is given as 
$$ A(G) = \begin{bmatrix}
A_1  & \rho_{12} \chi_{S_{1}}\chi_{S_{2}}^t & \cdots & \rho_{1k} \chi_{S_{1}}\chi_{S_{k}}^t \\
\rho_{21} \chi_{S_{2}}\chi_{S_{1}}^t & A_2 & \cdots & \rho_{2k} \chi_{S_{2}}\chi_{S_{k}}^t \\
\vdots & \vdots & \ddots & \vdots  \\
\rho_{k1} \chi_{S_{k}}\chi_{S_{1}}^t & \rho_{k2} \chi_{S_{k}}\chi_{S_{2}}^t & \cdots & A_k
\end{bmatrix}.$$
Then by direct application of Theorem \ref{mainthm} and Theorem \ref{mainthm2} on $A(G)$, the proofs of (i) and (ii) follow immediately.
\end{proof}
	
\section{Spectra of generalized corona of graphs}
In \cite[Theorem 3.1]{lsk}, the generalized corona product is defined as below and its characteristic polynomial is calculated. In this subsection, we deduce this result as a corollary of Theorem \ref{mainthm}. This is done by viewing the corona product as $H$-join of suitably chosen graphs. 

\begin{defn}
	Let $H'$ be a graph on $k$ vertices. Let $G_1,G_2,\dots,G_k$ be graphs of order $n_1,n_2,\dots,n_k$ respectively. The generalized corona product of $H'$ with $G_1,G_2,\dots,G_k$, denoted by $H' \tilde{\circ} \Lambda_{i=1}^n G_i$, is obtained by taking one copy of graphs $H',G_1,G_2,\dots,G_k$, and joining the $i^{th} $ vertex of $H'$ to every vertex of $G_i$. 
\end{defn}
When $G_i=G'$ for all $i$, the graph $H' \tilde{\circ} \Lambda_{i=1}^n G_i$ is called simply corona of $H'$ and $G'$, denoted by $H' \circ G'$.
\begin{thm}\label{scorona}
	Let $H'$ be a graph with vertex set $V(H')=\{v_1,v_2,\dots,v_k\}$. Let $G_1,G_2,\dots,G_k$ be any graphs. $\rho_{ij} = 1 \text{ if}$ $v_iv_j \in E(H)$ and  $0 \text{ otherwise}$. The characteristic polynomial of the generalized corona product $G = H' \tilde{\circ} \Lambda_{i=1}^k G_i$ is given by
	$$\phi_G(\lambda)= \Pi_{i=1}^k \phi_{G_i}(\lambda) \det(\widetilde A(H')) $$ where 
	$\widetilde A(H')=\begin{bmatrix}
	\lambda-\Gamma_{G_1}(\lambda) & -\rho_{12}  & \cdots & -\rho_{1,k}  \\
	-\rho_{21}  & \lambda-\Gamma_{G_2}(\lambda) & \cdots & -\rho_{2,k} \\
	\vdots & \vdots & \cdots & \vdots  \\
	-\rho_{k1} & -\rho_{k2} & \cdots & \lambda-\Gamma_{G_k}(\lambda)
	\end{bmatrix}$
\end{thm}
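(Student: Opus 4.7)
The plan is to realize the generalized corona $G = H' \tilde{\circ} \Lambda_{i=1}^k G_i$ as an $H$-join and then invoke Theorem~\ref{mainthm} directly. Construct an auxiliary graph $\hat H$ on $2k$ vertices $\{v_1,\dots,v_k,u_1,\dots,u_k\}$ obtained from $H'$ by attaching a pendant vertex $u_i$ at each $v_i$. Let the family be $\mathcal F' = (K_1,\dots,K_1,G_1,\dots,G_k)$, placing a single vertex $K_1$ at each $v_i$ and $G_i$ at each $u_i$. By construction $\bigvee_{\hat H} \mathcal F' = G$: the $v_i$-to-$v_j$ edges of $\hat H$ reproduce $H'$, the $v_i$-to-$u_i$ pendant edges produce the corona joins between the $i$-th isolated vertex and $V(G_i)$, and all other pairs are non-adjacent.

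Next I would apply Theorem~\ref{mainthm} to this $H$-join with $M_i = [0]$, $u_i = v_i = [1]$ for $1 \le i \le k$, and $M_{k+i} = A(G_i)$, $u_{k+i} = v_{k+i} = \mathbf 1_{n_i}$ for $1 \le i \le k$. Since $\phi_{K_1}(\lambda) = \lambda$ and, by Lemma~\ref{evmain}, $\Gamma_{K_1}(\lambda) = 1/\lambda$, the contributions from the $K_1$-factors satisfy $\phi_{K_1}\Gamma_{K_1} = 1$, so
\begin{equation*}
\phi_G(\lambda) \;=\; \Bigl(\prod_{i=1}^k \phi_{G_i}(\lambda)\,\Gamma_{G_i}(\lambda)\Bigr)\,\det \widetilde A(\mathbf M,\mathbf u,\rho),
\end{equation*}
where $\widetilde A(\mathbf M,\mathbf u,\rho)$ is the $2k \times 2k$ matrix produced by the theorem.

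The main calculation is to identify $\det\widetilde A(\mathbf M,\mathbf u,\rho)$ with $\det\widetilde A(H')/\prod_i \Gamma_{G_i}(\lambda)$. Reorder rows/columns so that the $k$ indices corresponding to the $K_1$-factors come first, followed by the indices from the $G_i$-factors. In that ordering $\widetilde A$ becomes the block matrix
\begin{equation*}
\widetilde A \;=\; \begin{bmatrix} \lambda I_k - A(H') & -I_k \\ -I_k & \operatorname{diag}(1/\Gamma_{G_1},\dots,1/\Gamma_{G_k}) \end{bmatrix},
\end{equation*}
because in $\hat H$ the only edge from $v_i$ to the $u$-side is $v_i u_i$ (contributing $-1$ on the diagonal of the off-diagonal blocks), and there are no edges among the $u_j$. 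Applying Lemma~\ref{lemd} (Schur complement on the invertible lower-right block) gives
\begin{equation*}
\det \widetilde A \;=\; \Bigl(\prod_{i=1}^k \frac{1}{\Gamma_{G_i}(\lambda)}\Bigr)\,\det\!\Bigl(\lambda I_k - A(H') - \operatorname{diag}(\Gamma_{G_1},\dots,\Gamma_{G_k})\Bigr) \;=\; \frac{\det \widetilde A(H')}{\prod_{i=1}^k \Gamma_{G_i}(\lambda)}.
\end{equation*}

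Plugging this back cancels the product $\prod_i \Gamma_{G_i}(\lambda)$ and yields the desired formula $\phi_G(\lambda) = \prod_{i=1}^k \phi_{G_i}(\lambda)\,\det \widetilde A(H')$. The only non-routine step is the Schur complement reduction of the $2k \times 2k$ matrix; once the alternating/block rearrangement is set up correctly, the sparsity of the $\hat H$ (pendant edges only) makes this immediate, and the rest is bookkeeping to check the diagonal entries of the resulting $k \times k$ matrix match $\lambda - \Gamma_{G_i}(\lambda)$ and the off-diagonal entries match $-\rho_{ij}$.
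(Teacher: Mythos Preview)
Your proposal is correct and follows essentially the same route as the paper: realize $G$ as the $\hat H$-join with $\hat H = H'\circ K_1$ and family $(K_1,\dots,K_1,G_1,\dots,G_k)$, use $\phi_{K_1}\Gamma_{K_1}=1$ to collapse the $K_1$ contributions, and then reduce the resulting $2k\times 2k$ matrix $\widetilde A$ to $\widetilde A(H')$ via the Schur complement (Lemma~\ref{lemd}) on the diagonal block $\operatorname{diag}(1/\Gamma_{G_i})$. The only cosmetic difference is that the paper phrases the first step as an application of Theorem~\ref{chUofG} with $(\alpha,\beta,\gamma,\delta)=(1,0,0,0)$ rather than invoking Theorem~\ref{mainthm} directly, which amounts to the same thing.
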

\begin{proof}
Let $H=H' \circ K_1.$ Let $v_{k+i}$ be the new vertex in $H$ attached with the vertex $v_i$ in the copy of $H'$, for $1 \le i \le k$. Let $\mathcal{F}=\{K_1, K_1, \dots, K_1, G_1, G_2, \dots, G_k\}$. Then we get the following visualization of generalized corona as $H$-join of graphs in $\mathcal{F}$. $$(H' \tilde{\circ} \Lambda_{i=1}^n G_i)=(\displaystyle \bigvee_{H}\mathcal{F}) $$
That is, each $v_i$ is replaced by $K_1$ and $v_{k+i}$ is replaced by $G_i$ in $H$, to form the $H$-join. 

Now $A(H)= \begin{bmatrix}
	A(H') & I_k \\ I_k & 0_k
	\end{bmatrix}$. Since $\phi_{K_1}(\lambda)=\lambda$ and $\Gamma_{K_1}(\lambda)=\dfrac{1}{\lambda}$, by letting $\alpha=1$, $\beta=\gamma=\delta=0$ in Theorem \ref{chUofG}, we get 
		$$\phi_G(\lambda) =\Bigg(\Pi_{i=1}^k (\phi_{K_1}(\lambda) \phi_{G_i}(\lambda) \Gamma_{K_1}(\lambda) \Gamma_{G_i}(\lambda))\Bigg) \det(\widetilde A(H))$$ which implies
	\begin{equation} \label{pfcorona}
\phi_G(\lambda)=\Bigg(\Pi_{i=1}^k (\phi_{G_i}(\lambda)\Gamma_{G_i}(\lambda))\Bigg)\det(\widetilde A(H))
	\end{equation}
where $\widetilde A(H)=\begin{bmatrix}
	\lambda I_k-A(H') & -I_k \\ -I_k & diag(\dfrac{1}{\Gamma_{G_1}(\lambda)},\dfrac{1}{\Gamma_{G_2}(\lambda)},\dots,\dfrac{1}{\Gamma_{G_k}(\lambda)})
	\end{bmatrix}$.\\
Now by Lemma \ref{lemd}, $\det(\widetilde H)$ is given as\\
 $\det\begin{bmatrix}
\frac{1}{\Gamma_{G_1}(\lambda)} & 0 & \cdots & 0\\
0 & \frac{1}{\Gamma_{G_2}(\lambda)} & \cdots & 0\\
\vdots& \vdots & \ddots & \vdots\\
0 & 0 & \cdots & \frac{1}{\Gamma_{G_k}(\lambda)}
\end{bmatrix}  \det \Bigg(\lambda I_k-A(H')-
\begin{bmatrix}
\Gamma_{G_1}(\lambda) & 0 & \cdots & 0\\
0 & \Gamma_{G_2}(\lambda) & \cdots & 0\\
\vdots& \vdots & \ddots & \vdots\\
0 & 0 & \cdots & \Gamma_{G_k}(\lambda))
\end{bmatrix} \Bigg)  $ 

$=\Bigg(\Pi_{i=1}^k \dfrac{1}{\Gamma_{G_i}(\lambda)}\Bigg) \det\bigg(\begin{bmatrix}
\lambda-\Gamma_{G_1}(\lambda) & 0 & \cdots & 0\\
0 & \lambda-\Gamma_{G_2}(\lambda) & \cdots & 0\\
\vdots& \vdots & \ddots & \vdots\\
0 & 0 & \cdots & \lambda-\Gamma_{G_k}(\lambda))
\end{bmatrix}-A(H')\bigg)$

Now by substituting $ \det(\widetilde H)$ in Equation \eqref{pfcorona} we get the required result.
\end{proof}
\begin{rem} Similarly, we can get the other variants of the spectra of the generalized corona of any graphs. The same work can be done on other variants of corona also by suitable choice of $H$.    
\end{rem}
 
\section{Examples}\label{examples}
To illustrate our results, we compute the characteristic polynomials of two particular examples on $H$-join of graphs and $H$-generalized join of graphs constrained by vertex subsets. Similarly, we can apply our other results also.

\begin{example}\label{ex1}
	Consider the graphs $H=P_3, G_1=P_3,G_2=K_{1,3}$ and $G_3=K_2$ as follows. 
\begin{center}
	\scalebox{0.7} 
	{
		\begin{pspicture}(0,-0.59375)(5.64375,0.59375)
		\psline[linewidth=0.02cm,dotsize=0.07055555cm 2.0]{*-*}(1.7846875,-0.08875)(3.4246874,-0.06875)
		\psline[linewidth=0.02cm,dotsize=0.07055555cm 2.0]{*-*}(3.4246874,-0.06875)(5.0646877,-0.04875)
		\usefont{T1}{ptm}{m}{n}
		\rput(1.7692188,-0.35875){$v_1$}
		\usefont{T1}{ptm}{m}{n}
		\rput(3.4092188,-0.35875){$v_2$}
		\usefont{T1}{ptm}{m}{n}
		\rput(5.0692186,-0.35875){$v_3$}
		\usefont{T1}{ptm}{m}{n}
		\rput(0.12671874,0.40125){$H:$}
		\end{pspicture} 
	}
	\scalebox{0.7} 
	{
		\begin{pspicture}(0,-2.6092188)(13.309063,2.6092188)
		\psline[linewidth=0.02cm,dotsize=0.07055555cm 2.0]{*-*}(1.9699908,1.4957812)(1.99,-0.14421874)
		\psline[linewidth=0.02cm,dotsize=0.07055555cm 2.0]{*-*}(1.99,-0.14421874)(2.010009,-1.7842187)
		\usefont{T1}{ptm}{m}{n}
		\rput(0.56453127,0.32578126){$G_1:$}
		\rput{90.43272}(8.840372,-7.1823015){\psarc[linewidth=0.02,dotsize=0.07055555cm 2.0]{*-*}(7.9843173,0.7957781){1.73}{24.734303}{154.47658}}
		\psline[linewidth=0.02cm,dotsize=0.07055555cm 2.0]{*-*}(7.2170687,0.86002195)(7.246196,-0.7798413)
		\psline[linewidth=0.02cm,dotsize=0.07055555cm 2.0]{*-*}(7.246196,-0.7798413)(7.275323,-2.4197047)
		\psline[linewidth=0.02cm,dotsize=0.07055555cm 2.0]{*-*}(11.469991,1.4757811)(11.49,-0.16421875)
		\psdots[dotsize=0.136](11.51,-1.8242188)
		\usefont{T1}{ptm}{m}{n}
		\rput(2.5745313,1.5457813){$v_1^{(1)}$}
		\usefont{T1}{ptm}{m}{n}
		\rput(2.5945313,-0.09421875){$v_2^{(1)}$}
		\usefont{T1}{ptm}{m}{n}
		\rput(2.5945313,-1.7142187){$v_3^{(1)}$}
		\usefont{T1}{ptm}{m}{n}
		\rput(7.8545313,2.4057813){$v_1^{(2)}$}
		\usefont{T1}{ptm}{m}{n}
		\rput(5.304531,0.30578125){$G_2:$}
		\usefont{T1}{ptm}{m}{n}
		\rput(10.324532,0.30578125){$G_3:$}
		\usefont{T1}{ptm}{m}{n}
		\rput(7.8145313,0.90578127){$v_2^{(2)}$}
		\usefont{T1}{ptm}{m}{n}
		\rput(7.8545313,-0.7342188){$v_3^{(2)}$}
		\usefont{T1}{ptm}{m}{n}
		\rput(7.8745313,-2.3742187){$v_4^{(2)}$}
		\usefont{T1}{ptm}{m}{n}
		\rput(12.074532,1.5257813){$v_1^{(3)}$}
		\usefont{T1}{ptm}{m}{n}
		\rput(12.094531,-0.11421875){$v_2^{(3)}$}
		\usefont{T1}{ptm}{m}{n}
		\rput(12.1145315,-1.7742188){$v_3^{(3)}$}
		\end{pspicture} 
	}
	
\end{center}
Let $\mathcal{F}=\{G_1,G_2,G_3 \}$. Then the $H$-join graph $G=\displaystyle \bigvee_{H}\mathcal{F}$ is given as 
\begin{center}
	\scalebox{0.9} 
	{
		\begin{pspicture}(0,-2.455)(8.867687,2.455)
		\psline[linewidth=0.02cm,fillcolor=black,dotsize=0.07055555cm 2.0]{*-*}(3.4196784,1.5299999)(3.4396875,-0.11)
		\psline[linewidth=0.02cm,fillcolor=black,dotsize=0.07055555cm 2.0]{*-*}(3.4396875,-0.11)(3.4596968,-1.7499999)
		\usefont{T1}{ppl}{m}{n}
		\rput(0.14921875,0.36){$G:$}
		\rput{90.43272}(7.8163457,-6.097542){\psarc[linewidth=0.02,dotsize=0.07055555cm 2.0]{*-*}(6.934005,0.8299969){1.73}{24.734303}{154.47658}}
		\psline[linewidth=0.02cm,fillcolor=black,dotsize=0.07055555cm 2.0]{*-*}(6.166756,0.89424074)(6.1958833,-0.7456226)
		\psline[linewidth=0.02cm,fillcolor=black,dotsize=0.07055555cm 2.0]{*-*}(6.1958833,-0.7456226)(6.2250104,-2.385486)
		\psline[linewidth=0.02cm,fillcolor=black,dotsize=0.07055555cm 2.0]{*-*}(8.739678,1.5099999)(8.759687,-0.13)
		\psdots[dotsize=0.136](8.779688,-1.79)
		\psline[linewidth=0.02cm](3.3796875,1.59)(6.1996875,2.41)
		\psline[linewidth=0.02cm](3.3796875,1.55)(6.1196876,0.89)
		\psline[linewidth=0.02cm](3.4396875,1.53)(6.2196875,-2.37)
		\psline[linewidth=0.02cm](3.4396875,-0.11)(6.1996875,2.43)
		\psline[linewidth=0.02cm](3.4196875,-0.17)(6.1996875,0.91)
		\psline[linewidth=0.02cm](3.4396875,-0.11)(6.2196875,-0.73)
		\psline[linewidth=0.02cm](3.4196875,-0.13)(6.2596874,-2.43)
		\psline[linewidth=0.02cm](3.4596875,-1.75)(6.2196875,2.43)
		\psline[linewidth=0.02cm](3.4596875,-1.75)(6.1596875,0.89)
		\psline[linewidth=0.02cm](3.4796875,-1.73)(6.2396874,-0.73)
		\psline[linewidth=0.02cm](3.4396875,-1.79)(6.2596874,-2.35)
		\psline[linewidth=0.02cm](6.2396874,2.37)(8.739688,1.47)
		\psline[linewidth=0.02cm](6.1796875,2.37)(8.719687,-0.13)
		\psline[linewidth=0.02cm](6.2196875,2.33)(8.739688,-1.77)
		\psline[linewidth=0.02cm](6.1796875,0.87)(8.719687,1.47)
		\psline[linewidth=0.02cm](6.2196875,0.85)(8.719687,-0.13)
		\psline[linewidth=0.02cm](6.1796875,0.95)(8.719687,-1.73)
		\psline[linewidth=0.02cm](6.2396874,-0.75)(8.779688,1.51)
		\psline[linewidth=0.02cm](6.2396874,-0.71)(8.739688,-0.15)
		\psline[linewidth=0.02cm](6.1796875,-0.77)(8.699688,-1.77)
		\psline[linewidth=0.02cm](6.2396874,-2.39)(8.779688,-1.81)
		\psline[linewidth=0.02cm](3.4396875,1.53)(6.2196875,-0.73)
		\end{pspicture} 
	}
\end{center}

We see that 
	$\phi_1(\lambda)=\lambda^3-2\lambda $, $\phi_2(\lambda)=\lambda^4-3\lambda^2$, $\phi_3(\lambda)=\lambda^3-\lambda $, $\Gamma_1(\lambda)=\dfrac{3\lambda+4}{\lambda^2-2}$,  $\Gamma_2(\lambda)=\dfrac{4\lambda+6}{\lambda^2-3}$ and  $\Gamma_3(\lambda)=\dfrac{3\lambda-1}{\lambda^2-\lambda}$. The characteristic polynomial of $G$ is $\lambda^3 (\lambda^3+4\lambda^2-\lambda-6)(\lambda^3-5\lambda^2-8\lambda+2)(\lambda+1)$ which is equal to $$\phi_1(\lambda)\phi_2(\lambda)\phi_3(\lambda)\Gamma_1(\lambda)\Gamma_2(\lambda)\Gamma_3(\lambda) \det\begin{bmatrix}
	\dfrac{1}{\Gamma_1(\lambda)}&-1&0\\
	-1&\dfrac{1}{\Gamma_2(\lambda)}&-1\\
	0& -1 &\dfrac{1}{\Gamma_3(\lambda)}
	\end{bmatrix}.$$
 
\end{example}


\begin{example}\label{ex2}
	Consider $H$ and $\mathcal{F}$ as in Example \ref{ex1}. Let $S_1=\{v_1^{(1)},v_2^{(1)}\}$, $S_2=\{v_1^{(2)},v_2^{(2)},v_4^{(2)}\}$ and $S_3=\{v_2^{(3)},v_3^{(3)}\}$. Then the $H$-generalized join graph $G=\displaystyle \bigvee_{H,S}\mathcal{F}$ is given as 
	\begin{center}
		\scalebox{0.9} 
		{
			\begin{pspicture}(0,-2.455)(8.858,2.455)
			\psline[linewidth=0.02cm,fillcolor=black,dotsize=0.07055555cm 2.0]{*-*}(3.4099908,1.5299999)(3.43,-0.11)
			\psline[linewidth=0.02cm,fillcolor=black,dotsize=0.07055555cm 2.0]{*-*}(3.43,-0.11)(3.450009,-1.7499999)
			\usefont{T1}{ppl}{m}{n}
			\rput(0.36453125,0.36){$G:$}
			\rput{90.43272}(7.806585,-6.087855){\psarc[linewidth=0.02,dotsize=0.07055555cm 2.0]{*-*}(6.9243174,0.8299969){1.73}{24.734303}{154.47658}}
			\psline[linewidth=0.02cm,fillcolor=black,dotsize=0.07055555cm 2.0]{*-*}(6.1570687,0.89424074)(6.186196,-0.7456226)
			\psline[linewidth=0.02cm,fillcolor=black,dotsize=0.07055555cm 2.0]{*-*}(6.186196,-0.7456226)(6.215323,-2.385486)
			\psline[linewidth=0.02cm,fillcolor=black,dotsize=0.07055555cm 2.0]{*-*}(8.729991,1.5099999)(8.75,-0.13)
			\psdots[dotsize=0.136](8.77,-1.79)
			\psline[linewidth=0.02cm](3.37,1.59)(6.19,2.41)
			\psline[linewidth=0.02cm](3.37,1.55)(6.11,0.89)
			\psline[linewidth=0.02cm](3.43,1.53)(6.21,-2.37)
			\psline[linewidth=0.02cm](3.43,-0.11)(6.19,2.43)
			\psline[linewidth=0.02cm](3.41,-0.17)(6.19,0.91)
			\psline[linewidth=0.02cm](3.41,-0.13)(6.25,-2.43)
			\psline[linewidth=0.02cm](6.17,2.37)(8.71,-0.13)
			\psline[linewidth=0.02cm](6.21,2.33)(8.73,-1.77)
			\psline[linewidth=0.02cm](6.21,0.85)(8.71,-0.13)
			\psline[linewidth=0.02cm](6.17,0.95)(8.71,-1.73)
			\psline[linewidth=0.02cm](6.23,-2.39)(8.77,-1.81)
			\psline[linewidth=0.02cm](6.25,-2.37)(8.73,-0.09)
			\end{pspicture} 
		}
		
	\end{center}
	Here, $\phi_1(\lambda)=\lambda^3-2\lambda $, $\phi_2(\lambda)=\lambda^4-3\lambda^2$ and $\phi_3(\lambda)=\lambda^3-\lambda $. Based on the choices of $S_1,S_2$ and $S_3$ we get $\Gamma_1(\chi_{S_1};\lambda)=\dfrac{2\lambda^2+2\lambda-1}{\lambda^3-2\lambda}$,  $\Gamma_2(\chi_{S_2};\lambda)=\dfrac{3\lambda}{\lambda^2-3}$ and  $\Gamma_3(\chi_{S_3};\lambda)=\dfrac{2\lambda^2-1}{\lambda^3-\lambda}$. The characteristic polynomial of $G$ is $\lambda^4(\lambda^6-18\lambda^4-6\lambda^3+35\lambda^2+6\lambda-15)$ which is equal to $$\phi_1(\lambda)\phi_2(\lambda)\phi_3(\lambda)\Gamma_1(\chi_{S_1};\lambda)\Gamma_2(\chi_{S_2};\lambda)\Gamma_3(\chi_{S_3};\lambda) \det\begin{bmatrix}
	\dfrac{1}{\Gamma_1(\chi_{S_1})}&-1&0\\
	-1&\dfrac{1}{\Gamma_2(\chi_{S_2})}&-1\\
	0& -1 &\dfrac{1}{\Gamma_3(\chi_{S_3})}
	\end{bmatrix}.$$ 
\end{example}

\end{document}